\newcommand{\seqnum}[1]{\href{https://oeis.org/#1}{\rm \underline{#1}}}
\def \modd#1 #2{#1\ ({\rm mod}\ #2)}
\def\suchthat{\, : \,}
\DeclareMathOperator{\per}{per}
\DeclareMathOperator{\ce}{ce}
\theoremstyle{plain}
\newtheorem{theorem}{Theorem}
\newtheorem{lemma}[theorem]{Lemma}
\newtheorem{proposition}[theorem]{Proposition}
\theoremstyle{definition}
\newtheorem{observation}[theorem]{Observation}
\theoremstyle{remark}
\author[Lubomíra Dvořáková et al.]{Lubomíra Dvořáková \thanks{ORCID:0000-0001-7208-4248} \affiliationmark{1}
  \and Edita Pelantová \thanks{ORCID:0000-0003-3817-2943} \affiliationmark{1}
  \and Jeffrey Shallit \thanks{ORCID:0000-0003-1197-3820} \affiliationmark{2}\thanks{Research supported by NSERC grant RGPIN-2024-03725.}}
\title[On a sequence of Kimberling]{On a sequence of Kimberling and its relationship to the Tribonacci word}
\affiliation{
  FNSPE, Czech Technical University, Prague, Czech Republic\\
  School of Computer Science, University of Waterloo, Waterloo, Canada  }
\keywords{Kimberling sequence, Tribonacci word, factor complexity, critical exponent, Walnut theorem-prover}
\begin{document}

\publicationdata{vol. 28:2}{2026}{19}{10.46298/dmtcs.16926}{2025-11-14; 2025-11-14; 2026-03-06}{2026-03-09}
\maketitle

\begin{abstract} In 2017, Clark Kimberling defined an interesting  sequence ${\bf B} = 0100101100 \cdots$ of $0$'s and $1$'s by
certain inflation rules, and he made a number of conjectures about this
sequence and some related ones.  In this note we prove his conjectures
using, in part, the {\tt Walnut} theorem-prover.  We show how his word
is related to the infinite Tribonacci word, and we determine both the factor complexity and critical exponent of  $\bf B$.
\end{abstract}



\section{Introduction}
In June 2017, Clark Kimberling defined sequence \seqnum{A288462} in the 
OEIS \cite{oeis} as follows:  it is the infinite fixed point of the inflation
rules $00 \mapsto 0101$,
$1 \mapsto 10$, starting with $00$. 
Because these rules involve
a type of substitution more complicated than just a morphism, it is more
challenging to analyze.  In this note we prove his conjectures
using, in part, the {\tt Walnut} theorem-prover.  We also show how his word
is related to the infinite Tribonacci word, and we determine both the factor complexity and critical exponent of $\bf B$.

It may seem odd to devote an entire paper to a particular sequence, but the methods we use are widely applicable, and so the paper may serve as a primer on how to attack such a sequence using a combination of combinatorial and automata-theoretic techniques.  

As stated, Kimberling's original description is perhaps slightly vague,
so here is some elaboration.   We start
with $B_0 = 00$.  To find $B_{i+1}$ from $B_i$, we do the following:  we
factor $B_i$ into maximal blocks of the form $00$, $0$, and $1$; here maximal
means we cannot extend a block further to the right or left.  Then 
$B_{i+1}$ is the result of applying
the inflation rules $0 \mapsto 0$, $1 \mapsto 10$, and $00 \mapsto
0101$ to $B_i$.

For example, here are the first few iterates:
\begin{align*}
B_0 &= 00 \\
B_1 &= 0101 
\end{align*}
\begin{align*}
B_2 &= 010010 \\
B_3 &= 0100101100 \\
B_4 &= 010010110010100101 \\
B_5 &= 01001011001010010110010010110010 .
\end{align*}
It is not hard to see that $B_{i+1}$ is a prefix of $B_i$ for $i \geq 2$,
and so there is a unique infinite word ${\bf B} = a_0 a_1 a_2 \cdots =
0100101100\cdots$ 
of which all the $B_i$, $i \geq 2$, are prefixes.

Consider $|B_i|$, the length of the $i$'th iterate.
Let $c_0 = 2$, $c_1 = 4$, $c_2 = 6$, $c_3 = 10$, and
define $c_i = 2c_{i-1} - c_{i-4}$ for $i \geq 4$; this is
sequence \seqnum{A288465} in the OEIS.
Kimberling conjectured that $|B_i| = c_i$ for all $i \geq 0$.
In this note, we prove
Kimberling's conjecture, as well as his conjectures about the
related sequences \seqnum{A288463} and \seqnum{A288464}.
We also find other properties of the sequence ${\bf B}$ that
link it to the infinite Tribonacci word \seqnum{A080843}.

Kimberling indexed the sequence ${\bf B}$ starting at position $1$.
However, for using {\tt Walnut}, it is easier to index starting
at position $0$, and this is the convention we use in this paper
at the beginning.  Later on, in Section~\ref{related}, we will have to use his indexing.

\section{Kimberling's conjecture}

In this section we prove Kimberling's conjecture about the lengths
of the words $B_i$.

\begin{proposition}
Let $c_0 = 2$, $c_1 = 4$, $c_2 = 6$, $c_3 = 10$, and
define $c_i = 2c_{i-1} - c_{i-4}$ for $i \geq 4$.  Then $|B_i|= c_i$ for $i \geq 0$.
\end{proposition}

\begin{proof}
A simple induction now shows that, for $n \geq 1$, the word $B_i$ starts with $01$
and contains no occurrences of either $000$ or $111$.  Therefore $B_i$ can be
factorized uniquely as a concatenation of the single initial $0$ and blocks of the
form $1$, $10$, and $100$.  For $x \in \{ 1, 10, 100 \}$, define $N_x (i)$ to be
the number of occurrences of the block $x$ in this
factorization of $B_i$.  Then clearly
$|B_i| = 1 + N_1(i) + 2N_{10} (i) + 3N_{100} (i)$.

Define a mapping $\xi$ on the blocks $x$ that obeys Kimberling's inflation 
rules, and hence sends $1$ to $10$, $10$ to $100$, and
$100$ to $100101$.  Write $B_i = 0 x_1 x_2 \cdots x_k$, where each $x_j \in \{ 1, 10, 100 \}$, $1 \leq j \leq k$. By checking what happens at the boundaries, we see that $B_{i+1} = 0 \xi(x_1) \cdots \xi(x_k)$.  

We now claim that the following relations hold for $i \geq 2$:
\begin{align}
N_1 (i) &= N_{100} (i-1) \label{eqn1} \\
N_{10} (i) &= N_{100} (i-1) + N_1 (i-1) \label{eqn10} \\
N_{100} (i) &= N_{10} (i-1) + N_{100} (i-1). \label{eqn100}
\end{align}
These follow immediately by looking at the image of each block above.

We now show that these identities are enough to directly prove the following (no induction needed!):
\begin{align}
N_1 (i) &= N_1(i-1) + N_1(i-2) + N_1(i-3) \label{en1}\\
N_{10} (i) &= N_{10}(i-1) + N_{10}(i-2) + N_{10}(i-3) \label{n10} \\
N_{100} (i) &= N_{100}(i-1) + N_{100}(i-2) + N_{100} (i-3) \label{n100} 
\end{align}
for $i \geq 4$. 

Proof of Eq.~\eqref{en1}:
\begin{align*}
N_1 (i) &= N_{100} (i-1) \quad \text{(by Eq.~\eqref{eqn1})}\\
&= N_{10} (i-2) + N_{100}(i-2)  \quad \text{(by Eq.~\eqref{eqn100})}\\
&= N_{10} (i-2) + N_1(i-1)  \quad \text{(by Eq.~\eqref{eqn1})} \\
&= N_{100} (i-3) + N_1(i-3) + N_1(i-1) \quad \text{(by Eq.~\eqref{eqn100})} \\
&= N_1(i-2) + N_1(i-3) + N_1(i-1) \quad \text{(by Eq.~\eqref{eqn1})} .
\end{align*}

Proof of Eq.~\eqref{n10}:
\begin{align*}
N_{100} (i) &= N_{10} (i-1) + N_{100} (i-1) \quad \text{(by Eq.~\eqref{eqn100})} \\
&= N_{100} (i-2) + N_1 (i-2) + N_{100} (i-1) \quad \text{(by Eq.~\eqref{eqn10})} \\
&= N_{100} (i-2) + N_{100} (i-3) + N_{100} (i-1) \quad \text{(by Eq.~\eqref{eqn1})}.
\end{align*}

Proof of Eq.~\eqref{n100}:
\begin{align}
N_{10} (i) &= N_{100} (i-1) + N_1 (i-1) \quad \text{(by Eq.~\eqref{eqn10})}  \nonumber\\
&= N_{10} (i-2) + N_{100} (i-2) + N_1(i-1) \quad \text{(by Eq.~\eqref{eqn100})} \nonumber \\
&= N_{10} (i-2) + (N_{10} (i-1) - N_1 (i-2)) + N_1 (i-1) \quad \text{(by Eq.~\eqref{eqn10})} \label{done} .
\end{align}

We also have
\begin{align*}
N_1(i-1) - N_1 (i-2) &= N_{100} (i-2) - N_{100} (i-3) \quad \text{(by Eq.~\eqref{eqn1})} \\
&= N_{10} (i-3) \quad \text{(by Eq.~\eqref{eqn100})},
\end{align*}
and substituting into Eq.~\eqref{done} gives the desired result for $N_{10} (i)$.

Now from above we know that $|B_i| - 1 = N_1 (i) + 2 N_{10} (i) + 3 N_{100} (i)$.  Since each of the
sequences $(N_1 (i))_i$, $(N_{10} (i))_i$, and $(N_{100} (i))_i$ 
on the right-hand side is annihilated by the shift polynomial $X^3 - X^2 - X - 1$, so is their linear
combination  $(N_1 (i) + 2 N_{10} (i) + 3 N_{100} (i))_i$.  And since $X-1$ annihilates the
constant sequence $1$, the sequence $(|B_i|)_i$ is annihilated by the product of
$X-1$ and $X^3 - X^2 - X - 1$, which is $X^4 - 2X^3 + 1$.   In other words,
$|B_{i}| = 2|B_{i-3}| - |B_{i-4}|$ for $i \geq 4$.  After comparing the initial conditions
$i = 0,1,2,3$, it now follows that
$|B_i| = c_i$ for all $i \geq 0$.
\end{proof}

\section{Relationship to the Tribonacci word}

The infinite sequence $\bf B$ given in \seqnum{A288462} is
quite closely related to the celebrated Tribonacci word
${\bf TR} = t_0 t_1 t_2 \cdots = 01020100102010102010010201020100102010102010
\cdots$, the fixed point of the morphism $0 \mapsto 01$, $1 \mapsto 02$, $2 \mapsto 0$.  For more information about $\bf TR$,  see, for example, \cite{Chekhova&Hubert&Messaoudi:2001}.

We will need some additional concepts.
Fix an infinite word $\bf x$.  We say $w$ is a {\it return word\/}
to $y$ in $\bf x$ if
${\bf x}[i..i+n-1]$ and ${\bf x}[j..j+n-1]$ for $i < j$ are two consecutive occurrences of $y$ in $\bf x$ and
$w = {\bf x}[i..j-1]$.   If $y$ occurs with bounded gaps in $\bf x$, we can write $\bf x$ as a concatenation of a finite prefix $v$ and the $t$ different return words to $y$, and hence write ${\bf x} =v \pi({\bf z})$ 
for some morphism $\pi$ and $\bf z$ a word over $\{ 0,1,\ldots ,t-1\}$.  Then $\bf z$ is called the {\it derived sequence} of $y$ in $\bf x$ and is denoted by ${\bf d}_{\bf x}(y)$. 

\begin{theorem}\label{thm:main}
Kimberling's sequence ${\bf B}$ is equal to $0f(\bf TR)$, where $f: 0\mapsto 10, \ 1\mapsto 0, \ 2\mapsto 1$.
\label{description}
\end{theorem}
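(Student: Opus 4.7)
My plan is to prove by induction on $i \geq 2$ the stronger identification
\[
B_i \;=\; 0\, f\!\left(\tau^{i-1}(0)\,\tau^{i-2}(0)\right),
\]
where $\tau$ is the Tribonacci morphism. Because $\tau^i(0) = \tau^{i-1}(0)\tau^{i-1}(1) = \tau^{i-1}(0)\tau^{i-2}(0)\tau^{i-2}(2)$, the word $\tau^{i-1}(0)\tau^{i-2}(0)$ is a prefix of $\tau^i(0)$, hence of $\mathbf{TR}$, of length $T_{i+1}+T_i$, which tends to infinity. Since $B_i$ is a prefix of $\mathbf{B}$ for $i\geq 2$, passing to the limit then gives $\mathbf{B} = 0\,f(\mathbf{TR})$. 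The base case $i=2$ is a direct check: $B_2 = 010010 = 0\,f(010)$. For the inductive step, using $B_{i+1}=\rho(B_i)$ with the inflation $\rho$ (greedy $\{00,0,1\}$-factorization followed by $0\mapsto 0$, $1\mapsto 10$, $00\mapsto 0101$), together with $\tau\bigl(\tau^{i-1}(0)\tau^{i-2}(0)\bigr) = \tau^i(0)\tau^{i-1}(0)$, it suffices to establish the following commutation for every prefix $w$ of $\mathbf{TR}$:
\[
\rho\bigl(0\, f(w)\bigr) \;=\; 0\, f\bigl(\tau(w)\bigr).
\]

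To prove this commutation, I would make the block factorization of $0\,f(w)$ explicit. The crucial structural observation is that $11$ never occurs in $\mathbf{TR}$ (every $1$ is followed by the first letter of the next $\tau(a)$, which is always $0$), and $f(1)=0$ is the unique $f$-image that starts with $0$. Hence the only way two adjacent $0$s can arise inside $f(w)$ is at a junction $f(0)f(1) = 10\cdot 0 = 100$, i.e., at an occurrence of $01$ in $w$; and the prepended $0$ is always isolated because $w$ begins with $0$, so $f(w)$ begins with $1$. This pins down the factorization completely, after which the equality $\rho(0\,f(w)) = 0\,f(\tau(w))$ can be verified block-by-block. The verification reduces to a short finite case check over the five $2$-letter factors of $\mathbf{TR}$, namely $00, 01, 02, 10, 20$. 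For the inductive step one also checks that the block decomposition of the finite $B_i$ matches that induced by the infinite word: since the letter of $\mathbf{TR}$ immediately following $\tau^{i-1}(0)\tau^{i-2}(0)$ is $0$ (the first letter of $\tau^{i-3}(0)$), the character of $0\,f(\mathbf{TR})$ just past $B_i$ is a $1$, so no $00$-block is cut off by truncation.

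The main obstacle is the context-sensitivity of $\rho$: the $\{00,0,1\}$-factorization depends on adjacent letters, so the commutation is not a letterwise morphism identity and cannot be proved by a naive letter-by-letter induction on $w$. Once the $11$-avoidance of $\mathbf{TR}$ is exploited as above to fully determine the block structure, however, the remaining verification becomes a routine finite case check, after which the induction and the limit argument immediately deliver $\mathbf{B}=0\,f(\mathbf{TR})$.
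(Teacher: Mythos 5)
Your argument is correct, and it takes a genuinely different route from the paper. The paper works at the level of return words: it observes that the return words to $10$ in $\bf B$ are $100$, $101$, $10$, that the inflation sends them to $100101$, $10010$, $100$ respectively, and hence that the derived sequence of $10$ is fixed by the Tribonacci morphism $\varphi$; this gives ${\bf B}=0\pi({\bf TR})$ with $\pi\colon 0\to 100,\ 1\to 101,\ 2\to 10$, and then $\pi=f\circ\varphi$ yields the statement. Your proof instead runs a direct induction on the iterates, $B_i=0f\bigl(\tau^{i-1}(0)\tau^{i-2}(0)\bigr)$, with the conjugation identity $\rho\bigl(0f(w)\bigr)=0f\bigl(\tau(w)\bigr)$ for prefixes $w$ of $\bf TR$ as the engine. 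The two arguments are cousins --- the paper's table of images of return words is essentially your commutation read off on the grouping of $w$ into pieces $01$, $0$, $2$ --- but yours is more elementary and, notably, makes explicit the point the paper passes over with ``it is easy to see'': why the context-sensitive block factorization of $0f(w)$ is what one expects. Your key observations (that $11$ is not a factor of $\bf TR$, that $f(1)=0$ is the only image beginning with $0$, so every $00$ in $f(w)$ comes from a junction $f(0)f(1)$ and runs of $0$ have length at most $2$; and that the prepended $0$ is isolated) do pin down the factorization completely, and the remaining check over the groups $01\mapsto 100\mapsto 100101=f(\tau(01))$, $0\mapsto 10\mapsto 100=f(\tau(0))$, $2\mapsto 1\mapsto 10=f(\tau(2))$ is exactly the finite verification you describe. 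The price of your approach is this bookkeeping; what it buys is a self-contained proof that does not presuppose the return-word structure of $\bf B$ or the theory of derived sequences.
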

\begin{proof}
It is easy to see that the return words to $10$ in $\bf B$ are $100, \ 101, \ 10$.
If we code $100$ with the letter $0$, $101$ with the letter $1$ and $10$ with the letter $2$, we get the derived sequence 
$${\bf d}_{\bf B}(10)=0102010010201010201001020102\cdots$$
Moreover, by the definition of $\bf B$, the sequence $0^{-1}\bf B$ is fixed under the following inflation rules applied to the return words:  $100\mapsto 100101, \ 101\mapsto 10010, \ 10\mapsto 100$. This immediately implies that the derived sequence is fixed under the morphism $\varphi: \ 0\mapsto 01, \ 1\mapsto 02, \ 2\mapsto 0$; i.e., the derived sequence ${\bf d}_{\bf B}(10)$ is equal to $\bf TR$.  Consequently, ${\bf B}=0\pi(\bf TR)$, where $\pi: 0\mapsto 100, \ 1\mapsto 101, \ 2\mapsto 10$.  

One can check that $\pi = f\circ \varphi$.\footnote{We thank to Pascal Ochem who  pointed out to us that the morphism $\pi$ can be replaced by the simple morphism $f$.}   
Hence, ${\bf B}=0\pi({\bf TR}) = 0f\bigl(\varphi({\bf TR})\bigr)  =  0f(\bf TR)$ as stated.
\end{proof}

Our next goal is to find a finite automaton that computes the sequence $\bf B$.  For this, we need the notion of Tribonacci representation of an integer.

By a well-known theorem \cite{Carlitz&Scoville&Hoggatt:1972},
every integer $n \geq 0$ can be written uniquely as a sum of
distinct Tribonacci numbers $T_i$ for $i \geq 2$, provided
one never uses three consecutive Tribonacci numbers in the
representation.   If we write
$n = \sum_{2 \leq i \leq t} e_i T_i$, we can alternatively
represent $n$ by the binary word $e_t e_{t-1} \cdots e_2$.
For example, $17 = 13 + 4 = T_6 + T_4$, and its Tribonacci representation is therefore
$10100$.

Our automaton that computes $\bf B$ is a DFAO (deterministic finite automaton
with output) computing ${\bf B}[n]$.  The input is $n$,
expressed in Tribonacci representation, and the output is
${\bf B}[n]$.  See, for example, \cite{Mousavi&Shallit:2015}.

Let $c_i (n)$ denote the number of occurrences of the letter $i$ in the length-$n$ prefix of $\bf TR$.  In
\cite[\S 10.12]{Shallit:2023},
it is shown how to obtain synchronous automata computing the maps
$n \mapsto c_i (n)$. By ``synchronous'' we mean  that these automata
(called {\tt c0}, {\tt c1}, {\tt c2}) take two inputs in parallel, $n$ and $x$, in Tribonacci representation and accept if and only if $x = c_i (n)$.  
See 
\cite{Shallit:2021} for more about the notion of synchronous automata.

From the description in Theorem~\ref{description} that ${\bf B} = 0 \pi({\bf TR})$, we therefore get the following algorithm for computing ${\bf B}[n]$:
\begin{itemize}
\item If $n=0$ then ${\bf B}[n]= 0$.
\item Otherwise, find $y$ such that $3c_0 (y) + 3 c_1(y) + 2c_2(y) + 1 \leq m < 3c_0 (y+1) + 3 c_1 (y+1) + 2c_2(y+1) + 1 $.  This is the position of $\bf TR$ that gives rise to the $n$'th letter of $\bf B$ under the map $\pi$.
\item Let $t = n - ( 3c_0 (y) + 3 c_1(y) + 2c_2(y) + 1)$.  This is the relative position within the image of ${\bf TR}[y]$ under $\pi$.  If $t = 0$ it is the first letter, if $t = 1$ it is the second letter, and so forth.
\item Then ${\bf B}[n] = 1$ if $t = 0$, or if $t=2$ and ${\bf TR}[y] = 1$; otherwise ${\bf B}[n] = 0$.
\end{itemize}

We can now write {\tt Walnut} code that implements this algorithm.  For more about {\tt Walnut} and its use in combinatorics on words, see
\cite{Shallit:2023}.  The first 7 lines are taken from \cite[\S 10.12]{Shallit:2023}.
\begin{verbatim}
reg shift {0,1} {0,1} "([0,0]|[0,1][1,1]*[1,0])*":

def triba "?msd_trib (s=0&n=0) | Ex $shift(n-1,x) & s=x+1":
# position of n'th 0 in Tribonacci, starting at index 1
def tribb "?msd_trib (s=0&n=0) | Ex,y $shift(n-1,x) & 
   $shift(x,y) & s=y+2":
# position of n'th 1 in Tribonacci, starting at index 1
def tribc "?msd_trib (s=0&n=0) | Ex,y,z $shift(n-1,x) &
   $shift(x,y) & $shift(y,z) & s=z+4":
# position of n'th 2 in Tribonacci, starting at index 1
def c0 "?msd_trib Et,u $triba(s,t) & $triba(s+1,u) & t<=n & n<u":
def c1 "?msd_trib Et,u $tribb(s,t) & $tribb(s+1,u) & t<=n & n<u":
def c2 "?msd_trib Et,u $tribc(s,t) & $tribc(s+1,u) & t<=n & n<u":

def find_t_and_y "?msd_trib Eu,v,a0,a1,a2,b0,b1,b2 $c0(y,a0) &
   $c1(y,a1) & $c2(y,a2) & $c0(y+1,b0) & $c1(y+1,b1) & $c2(y+1,b2) &
   u=3*a0+3*a1+2*a2+1 & v=3*b0+3*b1+2*b2+1 & u<=n & n<v & t+u=n"::
# 26 states

def bb "?msd_trib (Et,y $find_t_and_y(n,t,y) & 
   ((t=0) | (t=2 & TR[y]=@1)))":
combine B bb:
\end{verbatim}
This gives us the automaton in Figure~\ref{baut}.

Now that we have the automaton for $\bf B$, we can use {\tt Walnut} to provide rigorous proofs of assertions about the sequence $\bf B$.  We only need to phrase our assertions in first-order logic, and {\tt Walnut} can decide if they are {\tt TRUE} or {\tt FALSE}.
As an example of the utility of the automaton for $\bf B$, we now use {\tt Walnut} to prove a result about the balance of $\bf B$.
A sequence over $\{0,1\}$
is said to be $k$-balanced if for all factors $x,y$ of the
same length, the number of $1$'s in $x$ differs from the number
of $1$'s in $y$ by at most $k$ \cite{Berstel:1996}.

\begin{theorem}
The sequence $\bf B$ is $3$-balanced but not $2$-balanced.
\end{theorem}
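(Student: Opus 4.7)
The plan is to prove both assertions via \texttt{Walnut}, building on the DFAO for $\bf B$ constructed in the previous section (Figure~\ref{baut}). The crucial preliminary step is to produce a synchronized automaton $\texttt{ones}(n,x)$ that accepts the pair $(n,x)$, given in parallel Tribonacci representation, if and only if $x$ equals the number of $1$'s in the prefix ${\bf B}[0..n-1]$. This is a routine application of the standard prefix-counting construction of \cite[Ch.~9]{Shallit:2023}, combined with the synchronized Tribonacci automata (\texttt{c0}, \texttt{c1}, \texttt{c2}, \texttt{triba}, \texttt{tribb}, \texttt{tribc}) already defined in the paper.

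Once $\texttt{ones}$ is available, the number of $1$'s in the factor ${\bf B}[i..i+n-1]$ equals $\texttt{ones}(i+n)-\texttt{ones}(i)$, so the statement that $\bf B$ is $k$-balanced becomes the first-order sentence
\[
\forall i,j,n,a,b,c,d : \bigl(\texttt{ones}(i,a) \wedge \texttt{ones}(i{+}n,b) \wedge \texttt{ones}(j,c) \wedge \texttt{ones}(j{+}n,d)\bigr) \implies |(b-a)-(d-c)| \le k,
\]
which \texttt{Walnut} decides directly in the Tribonacci numeration system. We run the check with $k=3$, expecting the output \texttt{TRUE}; for the failure of $2$-balance we run the existential negation with $k=2$, which should return \texttt{TRUE} together with explicit witness positions $(i,j,n)$ exhibiting two factors of the same length whose $1$-counts differ by at least~$3$. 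These witnesses can then be reported as short explicit factors of $\bf B$ to make the failure of $2$-balance concrete.

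The main anticipated obstacle is not conceptual but computational: with three quantified positions and four count variables, the intermediate automata produced during determinization and projection can grow considerably. A natural mitigation is to introduce an intermediate synchronized automaton $\texttt{facones}(i,n,x)$ for the $1$-count of the length-$n$ factor starting at position~$i$, defined by $\texttt{ones}(i,a)\wedge \texttt{ones}(i{+}n,b)\wedge x=b-a$, so that the balance sentence reduces to $\forall i,j,n,x,y : \texttt{facones}(i,n,x)\wedge \texttt{facones}(j,n,y)\implies |x-y|\le k$. Apart from such routine engineering of the \texttt{Walnut} formulas, no new combinatorial ingredient beyond the automaton for $\bf B$ is required.
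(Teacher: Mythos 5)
Your proposal is correct and follows essentially the same route as the paper: build a synchronized prefix-counting automaton for the number of $1$'s in ${\bf B}[0..n-1]$ (the paper's \texttt{bpref1}), derive the factor-count predicate (the paper's \texttt{bfact1}, your \texttt{facones}), and have \texttt{Walnut} decide the $k$-balance sentence for $k=3$ (\texttt{TRUE}) and $k=2$ (\texttt{FALSE}). The only difference is cosmetic: the paper phrases the bound as $x\le y \Rightarrow y\le x+k$ rather than with an absolute value, and notes the $2$-balance failure occurs at length $n=47$.
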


\begin{proof}
We can prove this with {\tt Walnut}.  We need an automaton computing {\tt bpref1}, the number of $1$'s in ${\bf B}[0..n-1]$.   We can compute this using the same technique that we used to construct the automaton $\tt B$.
\begin{verbatim}
def bpref1 "?msd_trib (n<=1 & z=0) | 
Et,y,x,a0,a1,a2 $find_t_and_y(n-1,t,y) & $c0(y,a0) & $c1(y,a1) &
   $c2(y,a2) & z=a0+2*a1+a2+x+1 & x<=1 & (x=1 <=> (t=2 & B[n-1]=@1))":
# z = the number of 1's in B[0..n-1]
def bfact1 "?msd_trib Ex,y $bpref1(i,x) & $bpref1(i+n,y) & z+x=y":
# z = the number of 1's in B[i..i+n-1]
eval bal3 "?msd_trib An,i,j,x,y ($bfact1(i,n,x) & $bfact1(j,n,y) & 
   x<=y)  => y<=x+3":
eval bal2 "?msd_trib An,i,j,x,y ($bfact1(i,n,x) & $bfact1(j,n,y) & 
   x<=y) => y<=x+2":
\end{verbatim}
The first returns {\tt TRUE} and the second {\tt FALSE}.  (It fails at
$n = 47$, as observed by Pierre Popoli.)
\end{proof}

\begin{figure}[htb]
\begin{center}
\includegraphics[width=5.5in]{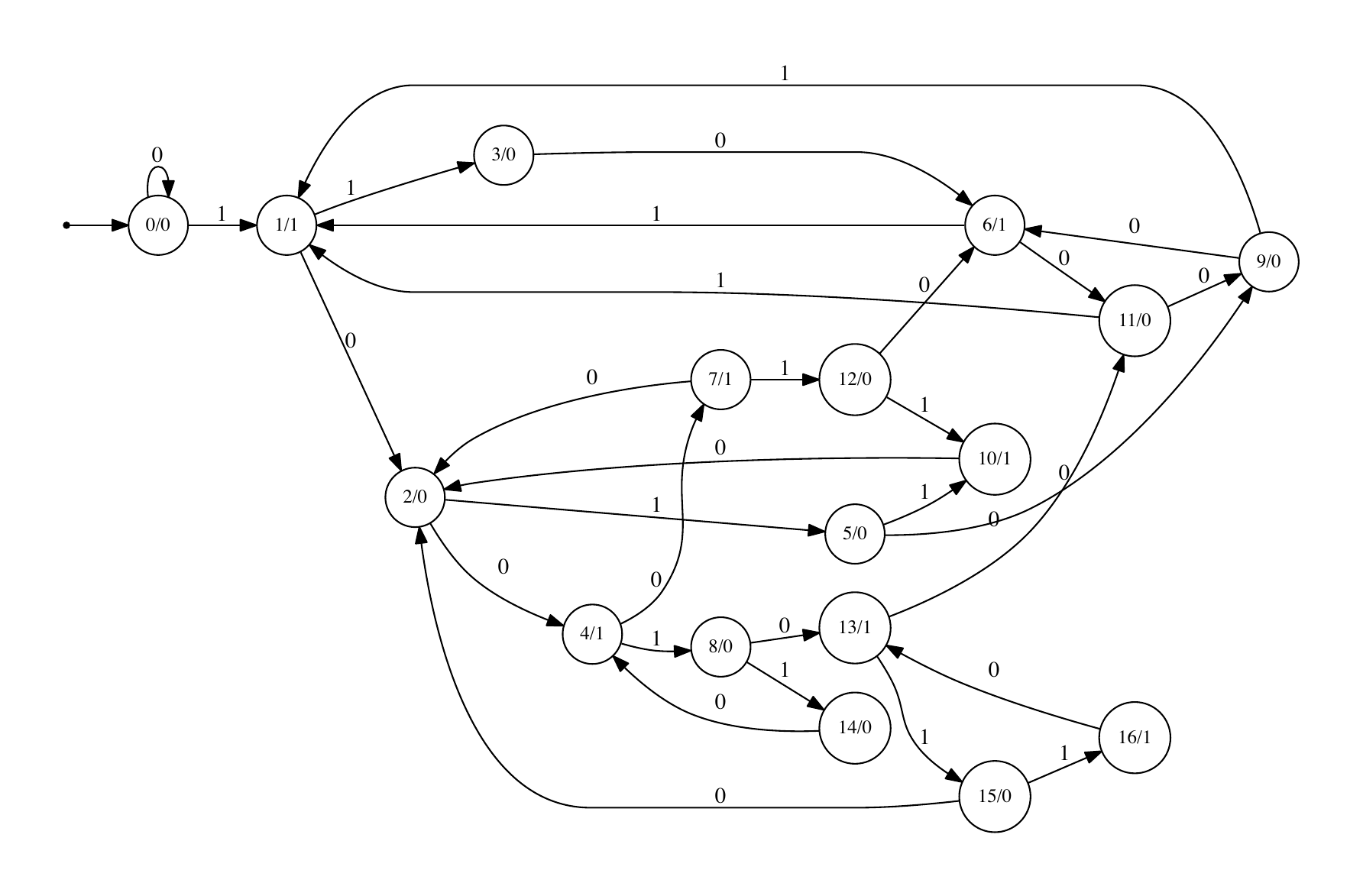}
\end{center}
\caption{The Tribonacci automaton for ${\bf B}[n]$.}
\label{baut}
\end{figure}

\section{Related sequences}
\label{related}

Kimberling also proposed the sequence \seqnum{A288464}, which 
consists of $I_1 (n)$, the index of the $n$'th $1$ in the sequence
$\bf B$, for $n \geq 1$.
However, he indexed $\bf B$ starting with $1$.   We keep his indexing here.
We can compute this with {\tt Walnut} as follows:
\begin{verbatim}
def nth1 "?msd_trib $bpref1(x,n) & $bpref1(x-1,n-1)":
# 69 states
\end{verbatim}

Similarly, we can compute $I_0 (n)$, the index of the $n$'th $0$ in
$\bf B$, again with $\bf B$ indexed starting at $1$.  This is sequence \seqnum{A288463} in the OEIS.
\begin{verbatim}
def bpref0 "?msd_trib Ey $bpref1(n,y) & z+y=n":
def nth0 "?msd_trib $bpref0(x,n) & $bpref0(x-1,n-1)":
# 30 states
\end{verbatim}

Kimberling conjectured that 
$$ -1 < \psi  - I_0(n)/n < 1$$
for $n \geq 1$ and some constant $\psi \doteq 1.83$.  It turns out
that $\psi = 1.8392867552\cdots$ is the Tribonacci constant, the unique real zero of the polynomial
$X^3-X^2-X-1$.

Kimberling also conjectured that
$$ -1 < \gamma - I_1(n)/n < 1$$
for $n \geq 1$ and some constant $\gamma \doteq 2.19$.  It turns out
that $\gamma = (\psi^2 + 1)/2 = 2.19148788\cdots $.

We will now prove more precise versions of these claims.
\begin{theorem}
For $n \geq 1$ we have
\begin{itemize}
\item[(a)] $\lfloor \psi n \rfloor - 2 \leq I_0(n)  \leq 
\lfloor \psi n \rfloor + 2$;
\item[(b)] $\lfloor \gamma n \rfloor -1 \leq 
I_1 (n) \leq \lfloor \gamma n \rfloor + 3$.
\end{itemize}
\end{theorem}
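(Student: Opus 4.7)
The plan is to reduce both inequalities to single first-order checks in \texttt{Walnut}, leveraging the synchronized automata \texttt{nth0} and \texttt{nth1} already constructed in the previous section. What is still missing is a pair of Tribonacci-synchronized automata \texttt{floorpsi} and \texttt{floorgamma} computing $n \mapsto \lfloor \psi n \rfloor$ and $n \mapsto \lfloor \gamma n \rfloor$, respectively.

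Because $\psi$ is the Tribonacci constant, a synchronized automaton for the Beatty sequence $n \mapsto \lfloor \psi n \rfloor$ can be built by the standard recipe from \cite{Shallit:2023}: multiplication by $\psi$ corresponds, up to a bounded rounding error, to a right-shift of the Tribonacci representation, and the error can be tracked with finitely many states. For $\gamma$, the defining relation $\psi^3 = \psi^2+\psi+1$ gives $(\psi^2+1)(\psi-1) = 2\psi$, so
\[
\gamma \;=\; \frac{\psi^2+1}{2} \;=\; \frac{\psi}{\psi-1}.
\]
This allows \texttt{floorgamma} to be obtained either by combining synchronized automata for $\lfloor \psi n \rfloor$ and $\lfloor \psi^2 n \rfloor$ via $2\gamma = \psi^2+1$, or by invoking the general machinery of \cite{Shallit:2023} for Tribonacci-synchronized Beatty sequences with parameter in $\mathbb{Q}(\psi)$.

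Once \texttt{floorpsi} and \texttt{floorgamma} are in place, each of (a) and (b) becomes a one-line \texttt{Walnut} verification:
\begin{verbatim}
eval bound_I0 "?msd_trib An,x,y (n>=1 & $nth0(n,x) & $floorpsi(n,y))
   => (y<=x+2 & x<=y+2)":
eval bound_I1 "?msd_trib An,x,y (n>=1 & $nth1(n,x) & $floorgamma(n,y))
   => (y<=x+1 & x<=y+2)":
\end{verbatim}

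The main obstacle I expect is the construction of \texttt{floorgamma}: although $\gamma$ lies in $\mathbb{Q}(\psi)$, the division by $2$ together with the mixing of distinct $\psi$-powers requires some care to keep the representation Tribonacci-synchronized, and the identity $\gamma = \psi/(\psi-1)$ is what keeps this tractable. A secondary concern is Kimberling's $1$-based indexing of $\bf B$, which I would reconcile by hand against the first several terms of $I_0$ and $I_1$ so that the asymmetric constants $-2,+2$ in (a) and $-1,+2$ in (b) come out exactly as stated rather than off by one.
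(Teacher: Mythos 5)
Your reduction to a one-line \texttt{Walnut} query would work only if the automata \texttt{floorpsi} and \texttt{floorgamma} actually existed, and that is precisely the step you cannot carry out: $n\mapsto\lfloor\psi n\rfloor$ is not known to be Tribonacci-synchronized, and there is no ``standard recipe'' in \cite{Shallit:2023} that produces it. The left shift of the Tribonacci representation does give a synchronized function that differs from $\psi n$ by a bounded amount, but resolving that bounded error down to the exact floor is exactly the part that resists automation: the error is a sum of contributions from the complex conjugate roots $\psi_2,\psi_3$ and can come arbitrarily close to flipping the integer part, so ``the error can be tracked with finitely many states'' is an unsupported assertion, not a construction. (Indeed, the old OEIS conjecture that the position $A_0(n)$ of the $n$th $0$ in ${\bf TR}$ equals $\lfloor\psi n\rfloor$ is false; the best available statement is the two-sided inequality $\lfloor\psi n\rfloor-1\le A_0(n)\le\lfloor\psi n\rfloor+1$, proved by a non-automatic argument in \cite{Dekking&Shallit&Sloane:2020}.) The situation for $\lfloor\gamma n\rfloor$ is worse still, since on top of this you would need to handle a division by $2$ inside the floor. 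Your algebra $\gamma=(\psi^2+1)/2=\psi/(\psi-1)$ is correct but does not remove the difficulty.

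The paper's proof is built to sidestep exactly this obstacle: it never constructs an automaton for a Beatty sequence. Instead it uses \texttt{Walnut} only to compare genuinely synchronized quantities --- $I_0(n)$ against $A_0(n)$ (showing they differ by at most $1$), and $2I_1(n)+1-n$ against $A_1(n)$ (showing the difference lies in $[0,5]$) --- and then imports by hand the estimates $\lfloor\psi n\rfloor-1\le A_0(n)\le\lfloor\psi n\rfloor+1$ and $\lfloor\psi^2 n\rfloor-2\le A_1(n)\le\lfloor\psi^2 n\rfloor+1$ from \cite{Dekking&Shallit&Sloane:2020}, finishing with elementary arithmetic and an integrality argument. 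To salvage your plan, replace \texttt{floorpsi} and \texttt{floorgamma} by the synchronized automata for $A_0$ and $A_1$ (the paper's \texttt{triba} and \texttt{tribb}) and supply the bridge to the floor functions from the cited inequalities; as written, your proof has a genuine gap at its central step.
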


\begin{proof}
\leavevmode
\begin{itemize}
\item[(a)] We use an estimate from \cite[Eq.~(30)]{Dekking&Shallit&Sloane:2020}; namely
\begin{equation}
\lfloor \psi n \rfloor - 1 \leq A_0(n) \leq 
\lfloor \psi n \rfloor + 1 
\label{an}
\end{equation}
for $n \geq 1$, where $A_0(n)$ is the position of the $n$'th $0$
in $\bf TR$, where $\bf TR$ is also indexed starting at
position $1$.   (Similar estimates can be found in \cite{Richomme&Saari&Zamboni:2010}.)

We also use some {\tt Walnut} code from 
\cite{Shallit:2023} for $A_0(n)$, namely the 
automaton {\tt triba}.

Now we show that $-1 \leq A_0(n) - I_0(n) \leq 1$:
\begin{verbatim}
eval cmp "?msd_trib An,x,y ($triba(n,x) & $nth0(n,y)) =>
   (x=y+1|y=x+1|x=y)":
\end{verbatim}
And {\tt Walnut} returns {\tt TRUE}.

Putting this together with Eq.~\eqref{an}, we get the estimate
$$ -2 \leq \lfloor \psi n \rfloor - I_0(n) \leq 2,$$
from which Kimberling's first inequality follows easily.

\item[(b)]  Let $A_1(n)$ denote the position of the $n$'th occurrence of $1$ in the
Tribonacci word $\bf TR$ (indexed starting at $1$).  In
\cite[Eq.~(31)]{Dekking&Shallit&Sloane:2020} the authors showed
$ \lfloor \psi^2 n \rfloor - 2 \leq A_1(n) \leq \lfloor \psi^2 n \rfloor + 1$
from which we get
\begin{equation}
-1 \leq \frac{A_1(n) - \lfloor \psi^2 n \rfloor}{2} \leq 1/2
\label{beq}
\end{equation}
by
rearranging.

On the other hand, we can use {\tt Walnut} to prove that
$$A_1(n) \leq 2 I_1 (n) + 1-n \leq A_1(n) + 5.$$
For $A_1(n)$ we use the code {\tt tribb} from \cite[\S 10.12]{Shallit:2023}:
\begin{verbatim}
eval comp2 "?msd_trib An,x,y,z ($tribb(n,x) & $nth1(n,y) &
   z+n=2*y+1) => (x<=z & z<=x+5)":
\end{verbatim}
And {\tt Walnut} returns {\tt TRUE}.
From this we get
$$ 0 \leq \frac{2 I_1 (n) - n + 1 - A_1(n)}{2} \leq 5/2.$$
Adding this to Eq.~\eqref{beq}
we get
$$-1 \leq  \frac{2 I_1 (n) - n - \lfloor \psi^2 n \rfloor + 1}{2} \leq 3$$
which, by rearranging, implies that
$$ -3/2 \leq I_1 (n) - \lfloor \gamma n \rfloor \leq 3$$
for $\gamma = (\psi^2+1)/2$.  Since $I_1 (n) - \lfloor \gamma n \rfloor$
is an integer, we get
$$ -1 \leq I_1 (n) - \lfloor \gamma n \rfloor \leq 2,$$
from which
Kimberling's second inequality now follows easily.
\end{itemize}
This completes the proof.
\end{proof}

\section{Factor complexity and critical exponent}

Recall that by {\it factor\/} we mean  a contiguous block of letters within a word.

Recall that the factor complexity (aka factor complexity) of a sequence is the function mapping $n$ to the number of distinct blocks of length $n$ appearing in it.  A word is called a Rote word, defined in~\cite{Rote:1994}, if its factor complexity is $2n$ for $n \geq 1$.

We will also need the notation of critical exponent of an infinite word.
We say a finite word $w = w[1..n]$ has period $p$
if $w[i]=w[i+p]$ for all $i$, $1 \leq i \leq n-p$.  The smallest nonzero period is called {\it the\/} period and is denoted $\per(w)$.  The exponent of a finite nonempty word $w$ is then
$\exp(w) := |w|/\per(w)$.  Let $\bf x$ be an infinite word.  Then the critical exponent of $\bf x$, written $\ce({\bf x})$, is $\sup \{ \exp(w) \suchthat
w \text{ is a factor of {\bf x}} \}$.

The goal of this section is to prove that the sequence $\bf B$ belongs to the class of Rote words and has the same critical exponent as that of the Tribonacci word, see~\cite{Tan&Wen:2007}. 

More specifically, we aim to prove the following two theorems.

\begin{theorem}\label{thm: complexity}
The factor complexity of $\bf B$ is $2n$ for $n \geq 1$.
\end{theorem}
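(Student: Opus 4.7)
The plan is to reduce the complexity claim to a single first-order sentence that \texttt{Walnut} can decide, using the Tribonacci automaton for ${\bf B}$ constructed in Section~3. Recall the classical identity, valid for any infinite binary word ${\bf x}$: the first difference $p_{\bf x}(n+1) - p_{\bf x}(n)$ equals the number of length-$n$ factors $w$ of ${\bf x}$ such that both $w0$ and $w1$ are factors, i.e., the number of right-special factors of length $n$. Since both letters $0$ and $1$ occur in ${\bf B}$, we have $p_{\bf B}(1) = 2$, so by telescoping it suffices to prove that for every $n \geq 1$ the number of right-special factors of length $n$ in ${\bf B}$ is exactly two.

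To carry this out I would first define in \texttt{Walnut} a predicate $\textsf{rspec}(i, n)$ asserting that $i$ is the leftmost occurrence of a right-special factor of length $n$. Concretely, $\textsf{rspec}(i, n)$ holds iff there exist positions $j_0, j_1$ at which the factor ${\bf B}[i..i+n-1]$ reoccurs, with ${\bf B}[j_0 + n] = 0$ and ${\bf B}[j_1 + n] = 1$, and no position $j < i$ satisfies ${\bf B}[j..j+n-1] = {\bf B}[i..i+n-1]$. Equality of factors and each of the other conditions are expressible in first-order logic over the Tribonacci-automatic structure of ${\bf B}$, so \texttt{Walnut} compiles $\textsf{rspec}$ into a synchronized DFA. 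One \texttt{eval} command then checks the sentence: for every $n \geq 1$ there exist distinct $i_1, i_2$ with $\textsf{rspec}(i_k, n)$, and there do not exist three pairwise-distinct positions all satisfying $\textsf{rspec}(\cdot, n)$. If \texttt{Walnut} returns \texttt{TRUE}, the theorem is proved.

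The main obstacle I anticipate is computational rather than conceptual: the nested universal and existential quantifiers encoding $\textsf{rspec}$, combined with the ``exactly two'' formula, tend to produce large intermediate automata, and state blow-up could be serious. If the direct verification turns out to be infeasible, a structural fallback is to exploit Theorem~\ref{thm:main}, namely ${\bf B} = 0\,f({\bf TR})$, together with the fact that ${\bf TR}$ is episturmian and hence has exactly one right-special factor of each length $n \geq 1$, with three right extensions. One would then analyze how the non-uniform morphism $f$, with image lengths $2, 1, 1$, transforms the unique right-special factor of ${\bf TR}$ at each length into right-special factors of ${\bf B}$, tracking the possible starting shifts inside the $f$-image of the special letter and verifying that exactly two right-special factors of ${\bf B}$ arise at each length.
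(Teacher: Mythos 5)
Your reduction of the theorem to ``exactly two right-special factors of each length $n\geq 1$'' via the first-difference identity is correct and is in essence the same reduction the paper uses (the paper counts left-special factors instead, which is equivalent for the first difference over a binary alphabet). However, your primary route does not yield a proof: the authors state explicitly that the direct {\tt Walnut} verification of the complexity statement ``fails to terminate within reasonable bounds on space and time,'' so the state blow-up you flag as a risk is in fact fatal, and the paper abandons this route entirely. Your fallback is the right idea but is only a one-sentence sketch, and the entire mathematical content of the theorem lives in the details you defer. Concretely, the paper does the following work that your proposal omits: it replaces $f$ by the \emph{injective} morphism $\pi = f\circ\varphi\colon 0\to 100,\ 1\to 101,\ 2\to 10$ (note that $f$ itself is not injective as a morphism, e.g., $f(21)=10=f(0)$, so tracking special factors directly through $f$ requires extra care about which preimages actually occur in $\bf TR$); it uses the known two-sided extension types of the bispecial factors $b_n$ of $\bf TR$ to produce a complete list of the bispecial factors of $\bf B$ of length $\geq 5$ (Lemma~\ref{lem: BS}), including an argument that no others exist, based on the fact that any long bispecial factor of $\bf B$ must begin with $10$ or $010$ and end with $10$ or $101$ and hence $\pi$-desubstitute onto a bispecial factor of $\bf TR$; and it then shows every such factor is a prefix of either $\pi({\bf TR})$ or ${\bf B}=0\pi({\bf TR})$, so that the left-special factors of each length are exactly the two prefixes of those sequences (Lemma~\ref{lem: LS}). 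Without carrying out this classification --- in particular, without ruling out additional special factors arising from the boundary effects of the non-uniform images of lengths $3,3,2$ --- your outline does not establish that the count is exactly two rather than merely at least two.
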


\begin{theorem}\label{thm: critical_exponent}
The critical exponent of $\bf B$ is $2+\frac{1}{\psi-1} = 3.19148788395\cdots$,
where $\psi$ is the real zero of $X^3-X^2-X-1$.
\end{theorem}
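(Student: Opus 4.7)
The plan is to use the DFAO for $\bf B$ constructed in Section~3 together with the Walnut-based methodology for computing critical exponents of Tribonacci-automatic sequences, as developed in \cite[Chap.~10]{Shallit:2023}. The first step is to write in Walnut the predicate ``${\bf B}[i+j] = {\bf B}[i+j+p]$ for all $0 \le j < n-p$ and $p \ge 1$'', which asserts that $p$ is a period of the factor ${\bf B}[i..i+n-1]$. Existentially quantifying over $i$ and restricting to the least such $p$ yields a Tribonacci-synchronized automaton recognizing the pairs $(n,p)$ such that some factor of $\bf B$ has length $n$ and least period $p$. The critical exponent $\ce({\bf B})$ is then the supremum of $n/p$ over the pairs accepted by this automaton.

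For the upper bound, I would analyze the cycles of the resulting synchronized automaton: unbounded accepting paths correspond to cycles, and the maximum slope $n/p$ is realised in the limit along specific cycle families. Because the automaton is in Tribonacci numeration, these slopes take the form of ratios of Tribonacci-linear expressions, whose limits are algebraic expressions in $\psi$; we expect the extremal value to equal $\frac{2\psi-1}{\psi-1} = 2 + \frac{1}{\psi-1}$, matching the Tan--Wen constant for $\bf TR$ itself \cite{Tan&Wen:2007}. For the matching lower bound, I would transport the Tan--Wen extremal family from $\bf TR$ through the morphism $f$ of Theorem~\ref{thm:main}; since $f$ is non-erasing, these images embed as factors of $\bf B$ (modulo the short prefix $0$), and a direct Walnut check should confirm that their exponents converge to the claimed value.

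The main obstacle is extracting the exact irrational bound from the automaton: Walnut cannot directly verify ``$n/p \le r$'' for irrational $r$. The standard workaround is to combine (i)~a sequence of Walnut-certified rational upper bounds of the form $n \le a_k\,p + b_k$ with $a_k \downarrow 2 + \frac{1}{\psi-1}$, which together squeeze the supremum from above, with (ii)~the explicit extremal family realising the value in the limit. Identifying the Tribonacci recurrences that govern the length and period of this family --- essentially transported from the Tan--Wen analysis of $\bf TR$ --- and verifying that $f$ does not create any new, larger repetitions, is the delicate piece of the calculation. Once the extremal recurrence is in hand, one passes to the limit $k \to \infty$ using $T_{k+1}/T_k \to \psi$ to obtain $\ce({\bf B}) = 2 + \frac{1}{\psi-1}$ exactly.
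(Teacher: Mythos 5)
Your plan follows exactly the computational route that the paper explicitly reports abandoning: the authors state that while Walnut could in principle prove this theorem, ``the computations fail to terminate within reasonable bounds on space and time.'' The synchronized automaton for the least-period predicate over the $26$-state \texttt{find\_t\_and\_y} machinery is precisely the object that could not be built, so the cycle analysis on which your upper bound rests has no object to analyze. The paper instead proves the theorem by pure combinatorics on words: it classifies all bispecial factors of $\bf B$ (Lemma~\ref{lem: BS}) as images under $\pi=f\circ\varphi$ of the known bispecial factors $b_n$ of $\bf TR$, bounds the lengths of their shortest return words (Proposition~\ref{prop: returns}), computes all lengths explicitly in Tribonacci numbers (Lemma~\ref{lem: lengthBS}), and then applies the return-word formula $\ce({\bf u})=1+\sup_n |w_n|/|v_n|$ of Dolce--Dvo\v{r}\'akov\'a--Pelantov\'a (Theorem~\ref{thm:FormulaForE}). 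This bypasses the automaton entirely.

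Beyond the feasibility issue, there is a logical gap in your fallback for the upper bound. The value $2+\frac{1}{\psi-1}$ is a supremum that is not attained, so verifying finitely many rational bounds $n\le a_k\,p+b_k$ with $a_k$ decreasing toward the target only yields $\ce({\bf B})\le a_K$ for the last $a_K$ you actually ran; combined with your lower-bound family this gives $2+\frac{1}{\psi-1}\le \ce({\bf B})\le a_K$, which never closes to equality. To get the exact irrational value you must structurally characterize \emph{all} high-exponent factors, not just exhibit one extremal family --- and that characterization is exactly the content you defer with the phrase ``verifying that $f$ does not create any new, larger repetitions.'' This is the crux of the whole proof. The paper settles it by showing that every bispecial factor of length $\ge 5$ has one of only four shapes ($\pi(b_n)10$, $\pi(b_n)101$, $0\pi(b_n)10$, $0\pi(b_n)101$) and then verifying the inequality $\frac{|w|}{|v|}\le\frac{\psi}{\psi-1}$ for each shape via the explicit formula $T_n=c_1\psi^n+c_2\psi_2^n+c_3\psi_3^n$ and the bound $|T_n-c_1\psi^n|<K\le 0.29$. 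Also note a smaller point: the lower bound does not come from transporting the Tan--Wen extremal powers of $\bf TR$ through the non-uniform morphism $f$ (whose letter images have different lengths, so exponents are not preserved); it comes from the ratio $|\pi(b_n)10|/|\pi(r_n)|\to 1+\frac{1}{\psi-1}$, where the coincidence with the Tan--Wen constant is a consequence of the computation, not an input to it.
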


In principle, {\tt Walnut} could be used to prove both of these theorems; but in practice, we were unable to complete the proof because the computations fail to terminate within reasonable bounds on space and time.  So in the next section, we use some known theory instead.

\subsection{Bispecial factors of the sequence $\bf B$}

For a binary sequence $\bf x$, we say a factor $w$ is right-special if $w0$ and $w1$ both appear in $\bf x$, and left-special if both $0w$ and $1w$ both appear in $\bf x$.  If $w$ is both right- and left-special, we say it is {\it bispecial}.

In order to determine both the factor complexity and the critical exponent of $\bf B$, the knowledge of bispecial factors in $\bf B$ is essential.
The description of bispecial factors and their return words in $\bf TR$ is taken from~\cite{Droubay&Justin&Pirillo:2001, Glen:2007, Dvorakova&Pelantova:2024}.
The sequence $(b_n)_{n=0}^{\infty}$ of all non-empty bispecial factors, ordered by length, in the Tribonacci word, satisfies $b_0=0$ and for $n\geq 1$,
$$b_n=\varphi(b_{n-1})0,\quad \text{where}\ \varphi: 0\mapsto 01, \ 1\mapsto 02, \ 2\mapsto 0\,.$$
Moreover, if $i \equiv \modd{n} {3}$ for $i\in\{-1,0,1\}$, then the both-sided extensions of $b_n$ are 
\begin{equation}\label{eq: bext}
(i+1)b_nj,\ jb_n(i+1)\quad \text{for}\ j\in\{0,1,2\}\,.
\end{equation}

By Theorem \ref{thm:main},    Kimberling'sequence is equal to $f({\bf TR})$. As ${\bf TR}$ is fixed by $\varphi$,  ${\bf B}$ is also the image  of the Tribonacci sequence under the morphism $\pi = f\circ \varphi$. Instead of $f$ we will work with the morphism  $\pi: 0\mapsto 100, \ 1\mapsto 101, \ 2\mapsto 10$ because it allows us to derive the form of bispecial factors in $\bf B$ in an easier way.

\begin{observation}
The non-empty bispecial factors of $\bf B$ of length $\leq 4$ are $0,1,01,10,010$.
\end{observation}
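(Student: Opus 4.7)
My plan is a finite verification. First, I would compute an explicit prefix of $\bf B$ long enough to exhibit every factor of length at most $5$. The initial fragment $B_5 = 01001011001010010110010010110010$, of length $32$, is more than long enough; every factor of length $\leq 5$ that appears anywhere in $\bf B$ already appears inside this prefix, a fact one can justify from the fact that $B_5$ is a prefix of all subsequent $B_i$ combined with uniform recurrence of $\bf B$ inherited from $\bf TR$ via Theorem~\ref{thm:main}.

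Second, I would enumerate all factors of lengths $1, 2, 3, 4$ of $\bf B$ from $B_5$ and, for each such $w$, determine its left-extensions $\{a \in \{0,1\} : aw\text{ is a factor}\}$ and right-extensions $\{b \in \{0,1\} : wb\text{ is a factor}\}$ by scanning the length-$(|w|+1)$ factors in $B_5$. Then $w$ is bispecial precisely when both sets have cardinality $2$. The outcome is the advertised list $0, 1, 01, 10, 010$; for example, $010$ occurs preceded by both $0$ and $1$ and followed by both $0$ and $1$, while the length-$3$ factor $001$ fails to be right-special since $0010$ occurs but $0011$ does not, and $101$ fails to be left-special since $0101$ occurs but $1101$ does not.

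The main obstacle is not conceptual but careful bookkeeping, in particular rigorously ruling out the various candidate extensions that happen not to occur. It is cleanest to justify all such non-appearances structurally from Theorem~\ref{thm:main} and the form $\pi = f \circ \varphi$ with $\pi\colon 0\to 100,\ 1\to 101,\ 2\to 10$: every factor of $\bf B$ of length $\leq 5$ sits inside $\pi(t_1 t_2 t_3 t_4)$ for some length-$4$ factor $t_1 t_2 t_3 t_4$ of $\bf TR$, so forbidden patterns such as $000$, $111$, $10101$, $1101$, and $00100$-type extensions can all be excluded by inspecting the finite list of short factors of $\bf TR$ (whose complexity is $2n+1$) together with their $\pi$-images. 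As an alternative to the combinatorial argument, the entire observation is directly decidable in Walnut using the automaton for $\bf B$ constructed earlier in the paper.
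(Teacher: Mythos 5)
Your verification is correct, and in fact it supplies more than the paper does: the paper states this as an \emph{Observation} with no proof at all, so any honest finite check is already an improvement. Two small points deserve care. First, your initial justification that $B_5$ contains every factor of length $\leq 5$ --- ``uniform recurrence'' of $\bf B$ --- is not enough by itself, since uniform recurrence gives no explicit recurrence bound; and you must not instead appeal to the complexity $2n$ of $\bf B$, since that is Theorem~\ref{thm: complexity}, which is proved later \emph{using} this observation. Your structural alternative is the right fix: every length-$5$ factor of $\bf B=0\pi({\bf TR})$ is either a prefix of $\bf B$ or lies inside $\pi(t)$ for a factor $t$ of $\bf TR$ of length at most $4$ (images of $\pi$ have length $\geq 2$), and the $2n+1$ complexity of $\bf TR$ makes that a finite, non-circular enumeration --- just remember to include the prepended $0$ explicitly, which your phrasing ``sits inside $\pi(t_1t_2t_3t_4)$'' technically omits. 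Second, for bispeciality you only need one-sided extensions $aw$ and $wb$, so length $\leq 5$ indeed suffices for candidates of length $\leq 4$; your sample exclusions ($0011$, $1101$, $000$, $111$ absent) all check out against the form of the images $100$, $101$, $10$. The Walnut alternative you mention is also legitimate and is consistent with how the rest of the paper operates.
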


\begin{lemma}\label{lem: BS}
The complete list of bispecial factors of $\bf B$ of length $\geq 5$ is as follows:
\begin{enumerate}
\item for $n \equiv \modd{0} {3}$:
$$\pi(b_n)10, \ \pi(b_n)101\,;$$
\item for $n\equiv \modd{1} {3}$:
$$\pi(b_n)10, \ \pi(b_n)101, \ 0\pi(b_n)10, \ 0\pi(b_n)101\,;$$
\item for $n\equiv \modd{2} {3}$:
$$\pi(b_n)10, \ 0\pi(b_n)10\,.$$
\end{enumerate}
\end{lemma}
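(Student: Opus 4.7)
My plan is to use the coding $\mathbf{B}=\pi(\mathbf{TR})$ from Theorem~\ref{thm:main} (with $\pi=f\circ\varphi$, so $\pi\colon 0\mapsto 100,\ 1\mapsto 101,\ 2\mapsto 10$) to transfer the known classification of bispecial factors of $\mathbf{TR}$ (recalled just before the lemma) to $\mathbf{B}$. The starting observation is that each image $\pi(j)\in\{100,101,10\}$ begins with the letter $1$, so the occurrences of $1$ in $\mathbf{B}$ canonically mark the starts of $\pi$-blocks. This yields a canonical ``$\pi$-parse'' of any factor $w$ of $\mathbf{B}$ of length $\geq 5$ in the form $w=s\cdot\pi(u)\cdot p$, where $u$ is a factor of $\mathbf{TR}$, $s$ is a proper suffix of some $\pi$-image, and $p$ is a proper prefix of one. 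Both $s$ and $p$ therefore range over very small finite sets, bounded by the image length $3$.

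Given a bispecial $w$, I would relate its one-letter extensions in $\mathbf{B}$ to those of $u$ in $\mathbf{TR}$ via the parse. Concretely, extending $w$ by $b\in\{0,1\}$ on the right means either completing the block $\pi(k_r)$ (so $b$ is the next letter prescribed by $\pi(k_r)$) or starting a fresh block $\pi(j)$ where $j$ is the next letter of $\mathbf{TR}$ after $u$ (so $b$ is the first letter of $\pi(j)$, which is always $1$). Thus the set of right extensions of $w$ in $\mathbf{B}$ is read off from the set of legal continuations of $u$ in $\mathbf{TR}$, subject to a fixed boundary $p$; an analogous statement holds for left extensions. Comparing candidate cores $u$ (taking the form $b_n$, $b_n j$, or similar short perturbations) against \eqref{eq: bext} shows that the only cores producing bispeciality in $\mathbf{B}$ of length $\geq 5$ come from $u=b_n$ for $n\geq 1$.

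The core step is then a case analysis for each residue of $n\bmod 3$ and each admissible boundary pair $(s,p)$, using \eqref{eq: bext} together with two simple facts about $\mathbf{TR}$: every occurrence of $1$ is followed by $0$, and every occurrence of $2$ is followed by $0$ (both immediate from $\varphi$). For instance, $\pi(b_n)10=\pi(b_n 2)$ is a factor for every $n$, and its right extensions are $0$ (when $b_n$ is followed by $0$ in $\mathbf{TR}$), $1$ (when followed by $1$), and $1$ (when followed by $2$, since the ensuing forced $0$ gives $\pi(0)=100$); since \eqref{eq: bext} allows all three continuations in every residue class, $\pi(b_n)10$ is always right-special. Its left extensions come from the last letters of $\pi(k)$ for $k$ a legal predecessor of $b_n$, and \eqref{eq: bext} again forces both $0$ and $1$ to appear. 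Analogous computations for $\pi(b_n)101$, $0\pi(b_n)10$, and $0\pi(b_n)101$ produce the residue-dependent restrictions in the three cases of the lemma.

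The chief difficulty is the completeness check: verifying that no other boundary pair produces a bispecial factor, and in particular that the candidates \emph{missing} from each residue class genuinely fail. For example, $0\pi(b_n)10$ fails right-speciality when $n\equiv 0 \pmod{3}$ because \eqref{eq: bext} forbids $b_n$ from being preceded by anything but $1$ unless the right extension is $1$, which forces the next letter of $\mathbf{B}$ to be $1$; and $\pi(b_n)101$ fails left-speciality when $n\equiv 2 \pmod{3}$ because the only legal predecessor of $b_n$ followed by $1$ or $2$ is $0$, whose $\pi$-image ends in $0$, fixing the preceding letter of $\mathbf{B}$ to $0$. These deterministic ``collapses'' in the extension graph must be catalogued combination-by-combination, but the total number of candidate pairs $(s,p)$ is finite and uniform in $n$, so the analysis terminates in finitely many verifications.
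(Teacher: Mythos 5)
Your proposal is correct and follows essentially the same route as the paper: both directions rest on pushing the known bispecial factors $b_n$ of $\bf TR$ and their two-sided extensions \eqref{eq: bext} through the morphism $\pi$, with completeness obtained from the canonical $\pi$-parse (desubstitution) of any sufficiently long factor of $\bf B$, and your two sample ``collapse'' verifications check out against \eqref{eq: bext}. One small imprecision: since $\pi(1)=101$ ends in $1$, it is the occurrences of the factor $10$ (not of the single letter $1$) that mark the starts of $\pi$-blocks, and your case analysis should also admit the core $b_0$ (the factors $\pi(b_0)10$ and $\pi(b_0)101$ already have length $\geq 5$), but neither point affects the validity of the approach.
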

\begin{proof}
On the one hand, using~\eqref{eq: bext}, it follows that the factors from Items 1 to 3 are all of the bispecial factors in $\bf B$ obtained by applying $\pi$ to both-sided extensions of $b_n$. For instance, for $n\equiv \modd{0} {3}$, the both-sided extensions of $b_n$ are $1b_n0,\ 1b_n1,\ 1b_n2,\ 0b_n1$, and $ 2b_n1$. 
Since 
\begin{align*}
\pi(1b_n0) &=101
\pi(b_n)100,\\
\pi(0b_n10)&=100\pi(b_n)101100,\\ \pi(1b_n20) &=101\pi(b_n)10100
\end{align*}
are factors of $\bf B$, it follows that $\pi(b_n)10$ and $\pi(b_n)101$ are bispecial factors in $\bf B$. 
On the other hand, each bispecial factor $w$ in $\bf B$ of length at least 5 starts with $10$ or $010$ and ends with $10$ or $101$. By the form of the morphism $\pi$, the factor $w$ takes one of the following forms
$$w\in\{\pi(b)10, \ \pi(b)101, \ 0\pi(b)10,\ 0\pi(b)101\}\,,$$
where $b$ is a~non-empty bispecial factor in $\bf TR$. Consequently, the factor $w$ is included in the list from Lemma~\ref{lem: BS}. 
\end{proof}

\subsection{Factor complexity of the sequence $\bf B$}
\begin{lemma}\label{lem: LS}
The set of left special factors of $\bf B$ is equal to the set of prefixes of $\pi(\bf TR)$ and of $\bf B=0\pi(\bf TR)$. 
\end{lemma}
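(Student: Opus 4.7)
The plan is to prove both inclusions: (A) every prefix of $\pi({\bf TR})$ and of $\bf B$ is left special in $\bf B$, and (B) every left special factor of $\bf B$ is such a prefix.

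For (A), I would leverage the standard fact about the Tribonacci word (as an Arnoux-Rauzy / episturmian sequence): every prefix $u$ of $\bf TR$ is preceded in $\bf TR$ by each of the three letters $0$, $1$, and $2$. Given an arbitrary non-empty prefix $w$ of $\pi({\bf TR})$, I would decompose it as $w = \pi(u) s$, where $u$ is a prefix of $\bf TR$ and $s \in \{\varepsilon,\, 1,\, 10\}$ is a proper prefix of the $\pi$-image of the next letter. Since $\pi(0)=100$ and $\pi(2)=10$ end in $0$ while $\pi(1)=101$ ends in $1$, the three occurrences of $\pi(u)$ in $\pi({\bf TR})$ coming from $0u, 1u, 2u \subset {\bf TR}$ supply both left extensions $0w$ and $1w$ in $\pi({\bf TR}) \subset \bf B$; the extra letters of $s$ are absorbed using the fact that each $\pi(a)$ starts with $1$ (and even with $10$). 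For a prefix $w$ of ${\bf B} = 0\pi({\bf TR})$, the leading $0$ of $\bf B$ gives the extension $0w$ for free, and $1w$ follows from an analogous interior argument.

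For (B), I would argue by induction on the length $n$, combined with the bispecial classification of Lemma~\ref{lem: BS}. The base cases $n \leq 4$ are handled directly from the observation preceding Lemma~\ref{lem: BS}. For the inductive step, the set of left special factors is prefix-closed, so any left special factor of length $n+1$ extends a left special prefix $w$ of length $n$; by hypothesis, $w$ is a prefix of $\pi({\bf TR})$ or of $\bf B$. If $w$ is not right-special, its unique right extension in $\bf B$ must agree with the next letter of the corresponding infinite sequence, and the extension is again left special and a prefix of that sequence. If $w$ is bispecial, then $w$ appears in the list of Lemma~\ref{lem: BS}, and a case analysis shows that its two right extensions $w0$ and $w1$ are precisely the next letters along the two distinguished ``left special paths'' given by $\pi({\bf TR})$ and $\bf B$.

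The main obstacle I anticipate is the bookkeeping at the bispecial branching points in the inductive step: for each bispecial factor of the form $\pi(b_n)\cdot s$ or $0\pi(b_n)\cdot s$ in Lemma~\ref{lem: BS}, one must identify which of its two right extensions continues the prefix of $\pi({\bf TR})$ and which continues the prefix of ${\bf B}$, and simultaneously confirm that no third left special extension appears. Because $b_n$ is itself a prefix of $\bf TR$ and $\pi(b_n)$ is a prefix of $\pi({\bf TR})$, the matching is ultimately determined by the letter of $\bf TR$ immediately following $b_n$, which depends on $n \bmod 3$ through the morphism $\varphi$; this is precisely the information already packaged in the extension rules \eqref{eq: bext}.
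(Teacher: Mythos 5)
Your overall strategy is viable and genuinely different in organization from the paper's proof, which is much shorter: it notes that by aperiodicity every left special factor of ${\bf B}$ is a prefix of a bispecial factor, and then checks that every bispecial factor from Lemma~\ref{lem: BS} is a prefix of $\pi({\bf TR})$ or of ${\bf B}$, using the fact that $b_n(i+1)$ is a prefix of ${\bf TR}$ for $n \equiv i \pmod{3}$. (Your part (A) does have the merit of making explicit the converse inclusion, which the paper leaves implicit.) However, your plan contains two concrete errors. First, in part (A), for a prefix $w=0w'$ of ${\bf B}=0\pi({\bf TR})$ the leading $0$ of ${\bf B}$ does \emph{not} give the extension $0w$ ``for free'': the occurrence of $w$ as a prefix of ${\bf B}$ has no left extension at all. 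To obtain $0w=00w'$ and $1w=10w'$ you must exhibit interior occurrences, e.g., the occurrences of the corresponding ${\bf TR}$-prefix $u$ preceded by $0$ and by $2$, whose images $\pi(0)=100$ and $\pi(2)=10$ end in $00$ and $10$ respectively. This is an easy repair, but the step is false as written.

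Second, and more seriously, the picture underlying your inductive step (B) is wrong. Since $\pi({\bf TR})$ begins with $1$ and ${\bf B}$ begins with $0$, a nonempty bispecial factor $w$ is a prefix of exactly \emph{one} of the two sequences, and both right extensions $w0$, $w1$ begin with the same letter as $w$; so it cannot happen that ``its two right extensions are precisely the next letters along the two distinguished left special paths.'' What must actually be proved at each branching point is that exactly one of $w0,w1$ (the one continuing the sequence of which $w$ is a prefix) is left special, while the \emph{other} right extension is not left special at all. For instance, for $n\equiv 0 \pmod{3}$, pushing the extension data~\eqref{eq: bext} through $\pi$ shows that $\pi(b_n)100$ is preceded only by $1$ (it comes only from $1b_n0$), whereas $\pi(b_n)101$ is preceded by both letters and continues $\pi({\bf TR})$. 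Your ``main obstacle'' paragraph frames the bookkeeping around the wrong question, so the case analysis as planned would not close the argument; once reoriented as above it does work, but at that point the paper's route (left special $\Rightarrow$ prefix of a bispecial factor $\Rightarrow$ prefix of $\pi({\bf TR})$ or ${\bf B}$) is considerably more economical.
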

\begin{proof}
Since $\bf B$ is aperiodic, each left special factor is the prefix of a bispecial factor. 
All bispecial factors of $\bf B$ are prefixes of $\bf B$ or $\pi(\bf TR)$. This statement is clear for bispecial factors of length $\leq 4$. 
It remains to see that the bispecial factors of length $\geq 5$, as listed in Lemma~\ref{lem: BS}, are prefixes of $\bf B$ or $\pi(\bf TR)$.
It can be easily proven by induction that $b_n(i+1)$ is the prefix of $\bf TR$ for $n\equiv \modd{i} {3}$, where $i\in\{-1,0,1\}$.
Therefore, for instance for $n\equiv \modd{0} {3}$, the bispecial factors $\pi(b_n)10$ and $\pi(b_n)101$ are prefixes of $\pi(b_n1)$, which is a~prefix of $\pi(\bf TR)$.
We can proceed analogously for $n\equiv \modd{1} {3}$ and $n\equiv \modd{2} {3}$.
\end{proof}

\begin{proof}[of Theorem~\ref{thm: complexity}]
By Lemma~\ref{lem: LS}, for each length $n\geq 1$, there are two distinct left special factors in $\bf B$, which confirms that the factor complexity is $2n$ for all $n\geq 1$.
\end{proof}

\subsection{Return words to bispecial factors in $\bf B$}
For the purpose of computing the critical exponent of $\bf B$, we intend to apply the following theorem. 

\begin{theorem}[\cite{Dolce&Dvorakova&Pelantova:2023}, Theorem 3]\label{thm:FormulaForE}
Let $\bf u$ be a uniformly recurrent aperiodic sequence.
Let $(w_n)_{n\in\mathbb N}$ be the sequence of all bispecial factors in $\bf u$ ordered by length.
For every $n \in \mathbb N$, let $v_n$ be the shortest return word to the bispecial factor $w_n$ in $\bf u$.
Then
$$
\ce({\bf u}) = 1 + \sup\left\{\frac{|w_n|}{|v_n|}\ : \ n\in \mathbb N \right\}\,.
$$
\end{theorem}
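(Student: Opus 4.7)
The proof splits naturally into two opposite inequalities, and I would prove each one separately.

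For the lower bound, the plan is to exhibit, for each bispecial factor $w_n$ with shortest return word $v_n$, an explicit factor of ${\bf u}$ whose exponent is at least $1 + |w_n|/|v_n|$. Since $v_n$ is a return word to $w_n$, the concatenation $v_n w_n$ is a factor of ${\bf u}$ in which $w_n$ occurs both at position $1$ and at position $|v_n| + 1$. This overlap forces $v_n w_n$ to have period at most $|v_n|$, so
$$\exp(v_n w_n) \geq \frac{|v_n w_n|}{|v_n|} = 1 + \frac{|w_n|}{|v_n|}.$$
Taking the supremum over $n$ yields $\ce({\bf u}) \geq 1 + \sup_n |w_n|/|v_n|$.

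For the upper bound, the goal is to show that every factor with exponent exceeding $1$ can be associated to a bispecial factor via its overlap. Let $z$ be any factor of ${\bf u}$ with period $p = \per(z) < |z|$, and set $y = z[1..|z|-p]$, so that $y$ is simultaneously a prefix and a suffix of $z$; equivalently, $y$ has two occurrences in ${\bf u}$ at distance $p$. The main idea is to extend $y$ maximally to the right and to the left while preserving the existence of two occurrences at distance $p$, producing a factor $\tilde y$ with $|\tilde y| \geq |y|$. Maximality of the extension forces the two occurrences of $\tilde y$ at distance $p$ to disagree immediately past both endpoints, which makes $\tilde y$ simultaneously left-special and right-special, hence bispecial. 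Because $p$ is still a return distance for $\tilde y$, the shortest return word $v$ to $\tilde y$ satisfies $|v| \leq p$, and so
$$\exp(z) = \frac{|z|}{p} = 1 + \frac{|y|}{p} \leq 1 + \frac{|\tilde y|}{p} \leq 1 + \frac{|\tilde y|}{|v|}.$$
Passing to the supremum over $z$ gives $\ce({\bf u}) \leq 1 + \sup_n |w_n|/|v_n|$.

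The main obstacle is the maximal-extension step. One has to justify that the extension process actually terminates, rather than producing an infinite one-sided or two-sided periodic word that would contradict aperiodicity of ${\bf u}$; uniform recurrence is what is needed to choose an occurrence of $y$ far enough from the beginning of ${\bf u}$ to permit the left extension. A second small technical point is verifying that the distance-$p$ witness for $\tilde y$ really yields a return word, not merely a repeated occurrence with $\tilde y$ overlapping itself; but since $\tilde y$ occurs at two positions separated by $p$, the factor between them is by definition a return word of length $p$, and the shortest return word can only be shorter. Once these two points are settled, the two inequalities together give the claimed equality.
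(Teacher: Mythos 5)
The paper does not actually prove this statement: it is quoted verbatim from the cited reference (Dolce--Dvo\v{r}\'akov\'a--Pelantov\'a 2023, Theorem 3) and used as a black box, so there is no internal proof to compare yours against. That said, your two-inequality argument is essentially the standard proof of this formula and is sound. The lower bound via the complete return word $v_nw_n$, which has $w_n$ as a border and hence period at most $|v_n|$, is correct as written. For the upper bound, two points deserve tightening. First, the two occurrences of $\tilde y$ at distance $p$ need not be \emph{consecutive} occurrences, so the word between them is not literally a return word; but they do certify that some pair of consecutive occurrences of $\tilde y$ lies at distance at most $p$, hence the shortest return word $v$ to $\tilde y$ satisfies $|v|\leq p$, which is all your chain of inequalities uses. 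Second, termination of the maximal extension is exactly where both hypotheses enter: one should first observe that factors admitting period $p$ have bounded length (otherwise, by compactness, the minimal subshift generated by $\bf u$ would contain a $p$-periodic point and hence $\bf u$ would be periodic), which kills both the infinite right extension and, combined with recurrence to move the chosen occurrence away from position $0$, the left extension. With those two clarifications, $\tilde y$ is bispecial by the mismatches at both ends, $|\tilde y|\geq |z|-p$, and $\exp(z)\leq 1+|\tilde y|/|v|$ follows, completing a correct proof.
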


It is thus necessary to describe the shortest return words to bispecial factors in $\bf B$.

There are three return words to each factor in the Tribonacci word. In particular, the return words to $b_n$, for $n\geq 0$, are 
$$\varphi^n(0),\ \varphi^n(01),\ \varphi^n(02)\,.$$

\begin{proposition} \label{prop: returns}
Let $w$ be a bispecial factor from the list in Lemma~\ref{lem: BS}.
\begin{itemize}
\item[(a)] If $w=\pi(b_n)10$, then the shortest return word to $w$ equals $\pi(\varphi^n(0))$;
\item[(b)] If $w =\pi(b_n)101$ or $b=0\pi(b_n)10$, then each return word to $w$ has length $\geq |\pi(\varphi^n(0))|$;
\item[(c)] If $w=0\pi(b_n)101$, then each return word to $w$ has length $\geq |\pi(\varphi^n(0))|+|\pi(\varphi^{n-1}(0))|$.
\end{itemize}
\end{proposition}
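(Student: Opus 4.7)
The plan is to transfer the analysis from $\bf B$ to $\bf TR$ using $\bf B = 0\pi(\bf TR)$ from Theorem~\ref{thm:main}. First I would establish a recognizability lemma: every occurrence in $\bf B$ of a bispecial factor $w$ from Lemma~\ref{lem: BS} starts and ends at boundaries between $\pi$-blocks, so that $w$ corresponds uniquely to an occurrence of $b_n$ in $\bf TR$ together with specific left- and right-extension letters. This is verified by enumerating the internal offsets inside $\pi(0)=100$, $\pi(1)=101$, $\pi(2)=10$ and ruling them out using the first letters of $w$ (which begin with $1$, or with $0$ followed by $1$).

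Given this correspondence, I read off the admissible extensions of $b_n$ in $\bf TR$ for each form of $w$: for $w=\pi(b_n)10$ any right extension $j\in\{0,1,2\}$ works (all $\pi(j)$ begin with $10$); for $w=\pi(b_n)101$ one needs $j\in\{1,2\}$ (since $\pi(0)=100$ does not begin with $101$); for $w=0\pi(b_n)10$ one needs left extension $k\in\{0,2\}$ (the letters whose $\pi$-image ends in $0$); for $w=0\pi(b_n)101$ both constraints hold. Call an occurrence of $b_n$ in $\bf TR$ \emph{good} if it meets the required constraints. Consecutive occurrences of $w$ in $\bf B$ correspond bijectively to consecutive good occurrences of $b_n$ in $\bf TR$, and the return word in $\bf B$ is the $\pi$-image of the $\bf TR$-factor between them.

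Parts (a) and (b) then follow quickly. In (a), every occurrence of $b_n$ is good, so the return words to $w$ in $\bf B$ are exactly $\pi(\varphi^n(0))$, $\pi(\varphi^n(01))$, $\pi(\varphi^n(02))$, whose shortest is $\pi(\varphi^n(0))$. In (b), the good occurrences form a proper subset, but any two consecutive good occurrences of $b_n$ in $\bf TR$ are separated by at least one full return word, hence by at least $|\varphi^n(0)|$ letters, so the bound transfers through $\pi$.

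Case (c) is the main obstacle. Here I would use that the derived sequence ${\bf d}_{\bf TR}(b_n)$ equals $\bf TR$, with letters $0,1,2$ coding $\varphi^n(0),\varphi^n(01),\varphi^n(02)$. The left-extension letter of $b_n$ at an occurrence is determined by the \emph{previous} return word (through its last letter), and the right-extension letter is determined by the \emph{current} one; both correspondences are bijections between $\{0,1,2\}$ and the three extension letters, computable from the last letters of $\varphi^n(0),\varphi^n(01),\varphi^n(02)$, which cycle in $n$ with period $3$. Working these out for $n\equiv 1\pmod 3$, the pairs $(d_{i-1},d_i)$ corresponding to good occurrences collapse to $(1,0)$ and $(2,0)$. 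Hence consecutive good positions in the derived sequence $\bf TR$ must be at derived-distance at least $2$, so the return word in $\bf B$ captures the $\pi$-images of at least two return words in $\bf TR$, namely the ones coded by $d_i=0$ and $d_{i'-1}\in\{1,2\}$. Since $\varphi^n(2)=\varphi^{n-1}(0)$ and $|\varphi^n(1)|>|\varphi^{n-1}(0)|$, the total is at least $|\varphi^n(0)|+|\varphi^{n-1}(0)|$, and applying $\pi$ yields the claimed bound. The delicate part is pinning down the period-$3$ correspondence between return-word types and extension letters, since the relation between $|\varphi^n(k)|$ and $|b_n|$ changes sign as $n$ grows, making the right-extension analysis case-dependent.
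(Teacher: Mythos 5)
Your parts (a) and (b) are correct and essentially coincide with the paper's argument: the paper gets the block-boundary correspondence from the injectivity of $\pi$ (every occurrence of $\pi(b_n)$ comes from an occurrence of $b_n$), concludes that return words to any factor containing $\pi(b_n)$ are concatenations of $\pi$-images of return words to $b_n$, and notes for (a) that $\pi(b_n)$ is always followed by $10$. Your reading of the admissible extension letters ($j\in\{1,2\}$ on the right for a following $101$, $k\in\{0,2\}$ on the left for a preceding $0$) is also correct.

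Part (c), however, contains a genuine error. You claim that for $n\equiv 1\pmod 3$ the good occurrences of $b_n$ have derived-sequence pairs collapsing to $(1,0)$ and $(2,0)$, i.e., that $d_i=0$ at every good position, and you deduce that consecutive good positions are at derived-distance at least $2$. This cannot be right: the right-extension letter of the $i$-th occurrence of $b_n$ is a bijective function of $d_i$, and since $0\pi(b_n)101$ is \emph{bispecial}, both right extensions $1$ and $2$ of $b_n$ must occur at good positions, so $d_i$ takes two distinct values there, not one. Concretely, two adjacent occurrences of $b_n$ separated by the single return word $\varphi^n(01)$ (or $\varphi^n(02)$) can both be good, so the derived-distance can equal $1$ and your "at least two return words" count fails. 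The statement survives for a different reason, which is the paper's argument: the one single return word that must be excluded is precisely the shortest one, $\varphi^n(0)$. Since $n\equiv 1\pmod 3$, the last letter of $\varphi^n(0)$ is $1$ and $\pi(1)=101$, so $\pi(\varphi^n(0))$ ends in $1$; hence the next $\pi(b_n)$ would be preceded by $1$ rather than $0$ and is not a good occurrence. Every remaining possibility for the gap --- a single $\varphi^n(01)$ or $\varphi^n(02)$, or a concatenation of at least two return words --- has $\pi$-length at least $\min\bigl\{|\pi(\varphi^n(00))|,|\pi(\varphi^n(01))|,|\pi(\varphi^n(02))|\bigr\}=|\pi(\varphi^n(02))|=|\pi(\varphi^n(0))|+|\pi(\varphi^{n-1}(0))|$, which gives the bound. (A secondary slip: with your coding $0,1,2\mapsto\varphi^n(0),\varphi^n(01),\varphi^n(02)$ the derived sequence is a letter-permuted copy of $\bf TR$, not $\bf TR$ itself; check $b_0=0$, whose return words occur in the order $01,02,01,0,\dots$.)
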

\begin{proof}
Using the form of $\pi$ and its injectivity, the factor $\pi(b_n)$ has a~unique preimage $b_n$, therefore the shortest complete return word $\varphi^n(0)b_n$ to $b_n$ gives rise to the shortest complete return word $\pi(\varphi^n(0)b_n)$ to $\pi(b_n)$. 
\begin{itemize}
\item[(a)] Since $\pi(b_n)$ is always followed by $10$, the factor $\pi(b_n)10$ has the same shortest return word as $\pi(b_n)$. This proves (a).
\item[(b)] The claim (b) follows immediately from (a).
\item[(c)] By Lemma~\ref{lem: BS}, the factor $w=0\pi(b_n)101$ is bispecial only for $n\equiv \modd{1} {3}$. Since the last letter of $\varphi^n(0)$ equals $i$, where $n\equiv \modd{i} {3}$, the factor $0\pi(\varphi^n(0)b_n)101$ has the suffix $101\pi(b_n)101$, which proves that $0\pi(\varphi^n(0))0^{-1}$ is not a~return word to $0\pi(b_n)101$. 

Since $0\pi(b_n)101$ contains $\pi(b_n)$, for each of its return words $v$, the word $0^{-1}v0$ is obtained as a~concatenation of return words to $\pi(b_n)$. This concatenation is not equal to $\pi(\varphi^n(0))$, and hence $$\begin{array}{rcl}
|v|&\geq & \min\{|\pi(\varphi^n(00))|, |\pi(\varphi^n(01))|, |\pi(\varphi^n(02))|\}=\\
&=&|\pi(\varphi^n(02))|=|\pi(\varphi^n(0))|+|\pi(\varphi^{n-1}(0))|.\end{array}$$ 
\end{itemize}
This concludes the proof.
\end{proof}

\subsection{Critical exponent of the sequence $\bf B$}
In order to apply Theorem~\ref{thm:FormulaForE}, we need to determine the lengths of bispecial factors and their shortest return words in $\bf B$.

Recall that in $\bf TR$, the sequence $(b_n)_{n=0}^{\infty}$ of all non-empty bispecial factors satisfies $b_0=0$ and 
$b_n=\varphi(b_{n-1})0$ for $n\geq 1$ and $r_n=\varphi^n(0)$ is the shortest return word to $b_n$.
Also recall that the Parikh vector of a word $x$ over the alphabet $\{0,1,\ldots, t-1\}$
is the vector
$(|x|_0, |x|_1, \ldots, |x|_{t-1}),$
where $|x|_a$ is the number of occurrences of $a$ in $x$.

The Parikh vectors of bispecial factors and their shortest return words in $\bf TR$ are
\begin{equation}\label{eq: Parikh}
\vec b_n=\frac{1}{2}\left(\begin{array}{c} 
T_{n+3}+T_{n+1}-1\\
T_{n+2}+T_{n\phantom{-1}}-1\\
T_{n+1}+T_{n-1}-1
\end{array}\right) \quad \text{and}\quad \vec r_n=\left(\begin{array}{c} T_{n+1}\\ T_{n\phantom{-1}} \\ T_{n-1}\end{array}\right) . 
\end{equation}
The explicit form of $T_n$ reads, for $n\geq 0$,
\begin{equation}\label{eq: explicit}
T_n=c_1 \psi_1^n+c_2\psi_2^n+c_3\psi_3^n\,,
\end{equation}
where $$\psi_1=\psi\doteq 1.8393, \ \psi_2=\overline{\psi_3}\doteq -0.4196+0.6063i,\ \text{and} \ c_j=\frac{1}{-\psi_j^2+4\psi_j-1}\ \text{for} j\in\{1,2,3\}\,.$$

The following lemma enables to express the lengths of all bispecial factors and their shortest return words in $\bf B$ in terms of the Tribonacci numbers.
\begin{lemma}\label{lem: lengthBS} 
For $n\geq 0$ we have
$$|\pi(r_n)|=T_{n+5}-T_{n+4} \quad \text{and} \quad |\pi(b_n)|=|\pi(r_n)|+T_{n+4}-4\,.$$
\end{lemma}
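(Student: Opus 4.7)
The approach is to deduce both identities directly from the Parikh vectors of $r_n$ and $b_n$ already recorded in~\eqref{eq: Parikh}, combined with the elementary fact that, since $\pi\colon 0\to 100,\ 1\to 101,\ 2\to 10$, one has $|\pi(w)| = 3|w|_0 + 3|w|_1 + 2|w|_2$ for every word $w$ over $\{0,1,2\}$.

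First I would handle $r_n = \varphi^n(0)$. Plugging $\vec r_n = (T_{n+1},T_n,T_{n-1})^T$ into the formula above immediately gives $|\pi(r_n)| = 3T_{n+1} + 3T_n + 2T_{n-1}$. On the other hand, the Tribonacci recurrence gives $T_{n+5} - T_{n+4} = T_{n+3} + T_{n+2}$, and expanding $T_{n+3}$ and $T_{n+2}$ in terms of $T_{n+1}, T_n, T_{n-1}$ via the same recurrence yields $T_{n+3} + T_{n+2} = 3T_{n+1} + 3T_n + 2T_{n-1}$. This matches $|\pi(r_n)|$ and proves the first identity.

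For $b_n$, substituting $\vec b_n$ from~\eqref{eq: Parikh} produces
$$|\pi(b_n)| = \tfrac{3}{2}(T_{n+3}+T_{n+2}) + \tfrac{3}{2}(T_{n+1}+T_n) + (T_{n+1}+T_{n-1}) - 4.$$
Reusing the just-proved identity $T_{n+3}+T_{n+2} = 3T_{n+1}+3T_n+2T_{n-1}$ and collecting terms, the right-hand side simplifies to $7T_{n+1} + 6T_n + 4T_{n-1} - 4$; one further application of the Tribonacci recurrence shows that $T_{n+5} = 7T_{n+1} + 6T_n + 4T_{n-1}$, so $|\pi(b_n)| = T_{n+5} - 4$. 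Since $T_{n+5} - 4 = (T_{n+5}-T_{n+4}) + T_{n+4} - 4 = |\pi(r_n)| + T_{n+4} - 4$, the second identity follows.

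No serious obstacle is anticipated: the entire argument is a bookkeeping calculation with the Tribonacci recurrence. The only point requiring a small care is the edge case $n=0$, where $T_{-1}$ appears in~\eqref{eq: Parikh}; adopting the standard convention $T_{-1}=0$ (consistent with running the recurrence backwards from $T_0 = 0$, $T_1 = T_2 = 1$) keeps the algebra uniform, and the two identities can also be verified directly at $n=0$ since $\pi(r_0) = \pi(b_0) = 100$ has length $3 = T_5 - T_4 = T_5 - 4$.
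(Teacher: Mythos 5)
Your proof is correct and is essentially the same as the paper's: the paper computes $|\pi(r_n)|=(1,1)M_\pi\vec r_n$ and $|\pi(b_n)|=(1,1)M_\pi\vec b_n$ with the incidence matrix of $\pi$, which is exactly your formula $|\pi(w)|=3|w|_0+3|w|_1+2|w|_2$ applied to the Parikh vectors of~\eqref{eq: Parikh}, followed by the same Tribonacci-recurrence bookkeeping. Your explicit check of the $n=0$ case with $T_{-1}=0$ is a nice touch the paper leaves implicit.
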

\begin{proof}
Using the Tribonacci recurrence, we get
$$|\pi(r_n)|=(1,1)M_\pi\vec r_n=(1,1)\left(\begin{array}{rcl} 2&1&1\\ 1&2&1\end{array}\right)\left(\begin{array}{c} T_{n+1}\\ T_{n\phantom{-1}} \\ T_{n-1}\end{array}\right)=T_{n+5}-T_{n+4}\,.$$
$$|\pi(b_n)|=(1,1)M_\pi\vec b_n=(1,1)\left(\begin{array}{rcl} 2&1&1\\ 1&2&1\end{array}\right)\frac{1}{2}\left(\begin{array}{c} 
T_{n+3}+T_{n+1}-1\\
T_{n+2}+T_{n\phantom{-1}}-1\\
T_{n+1}+T_{n-1}-1
\end{array}\right)=T_{n+5}-4\,.$$
This proves the assertion.
\end{proof}

\begin{proof}[of Theorem~\ref{thm: critical_exponent}]
Combining Theorem~\ref{thm:FormulaForE}, Lemma~\ref{lem: BS}, Proposition~\ref{prop: returns}, and Lemma~\ref{lem: lengthBS}, we have
 $$\ce({\bf B})\geq 1+\lim_{n\to\infty}\frac{|\pi(b_n)10|}{|\pi(r_n)|}=2+\lim_{n\to \infty}\frac{T_{n+4}-2}{T_{n+5}-T_{n+4}}=2+\frac{1}{\psi-1} . $$

 Now, for every bispecial factor $w$ of length $\leq 4$ in $\bf B$ and its shortest return word $v$, the inequality $1+\frac{|w|}{|v|}\leq 2+\frac{1}{\psi-1}\doteq 3.19$ holds.
Here is a table of such bispecial factors and their shortest return words.
$$\begin{array}{c|c|c|c|c|c}
    w &  0 & 1 & 01 & 10 & 010 \\ \hline
    v & 0 &  1 & 01 & 10 & 01 \\ \hline
    |w|/|v| & 1 & 1 & 1 & 1& 1.5 
\end{array}$$

To complete the proof that $\ce({\bf B})\leq 2+\frac{1}{\psi-1}=1+\frac{\psi}{\psi-1}$, it suffices to show that for all $n\geq 0$  we have
\begin{equation}\label{eq: inequality}
\frac{|\pi(b_n)101|}{|\pi(r_n)|}\leq \frac{\psi}{\psi-1}\quad \text{and} \quad \frac{|0\pi(b_n)101|}{|\pi(r_n)|+|\pi(r_{n-1})|}\leq \frac{\psi}{\psi-1}\,.
\end{equation}

The first inequality from~\eqref{eq: inequality} may be simplified as follows:
\begin{equation}\label{eq:theFirst}
\frac{T_{n+5}-1}{T_{n+5}-T_{n+4}}\leq \frac{\psi}{\psi-1}\,,\quad \text{or equivalently }\quad \psi  \leq  \frac{T_{n+5}-1}{T_{n+4}-1}\,.
\end{equation}

The second inequality from~\eqref{eq: inequality} can be rewritten as 
$$
\frac{T_{n+5}}{T_{n+5}-T_{n+3}}\leq \frac{\psi}{\psi-1}\,, \quad \text{or equivalently } \quad  
\psi \leq  \frac{T_{n+5}}{T_{n+3}}\,.
$$
Since $\frac{T_{n+5}}{T_{n+3}}\geq \frac{T_{n+5}-1}{T_{n+3}} \geq \frac{T_{n+5}-1}{T_{n+4}-1} $, only the first inequality \eqref{eq:theFirst} needs to be verified. It obviously holds for $n\in \{0,1\}$.
Using the explicit formula  \eqref{eq: explicit} for  $T_n$, we see that for $n \geq 2$ we have
$$T_n - c_1 \psi^n \in (-K, K),\ \  \text{where }K  \leq  2|c_2\psi^2_2| .
$$
For  $\psi\doteq 1.8393 $, 
$c_2\doteq -0.1681+0.1983i$ and $\psi_2\doteq -0.4196+ 0.6063i$, the parameter $K$ satisfies $K \leq 0.29$, and 
the inequality  $\psi\geq \frac{K+1}{1-K}$ holds. Hence 
  $$ \frac{T_{n+5}-1}{T_{n+4} - 1}\geq \frac{c_1\psi^{n+5} -1-K}{c_1\psi^{n+4} - 1 + K} \geq \psi .$$ 
The proof is now complete.
\end{proof}

\section{Acknowledgments}

We acknowledge with thanks conversations with Pierre Popoli.

\nocite{*}
\bibliographystyle{abbrvnat}
\bibliography{biblio-dmtcs}

@article{Berstel:1996,
author = {Berstel, Jean},
title = {RECENT RESULTS ON EXTENSIONS OF {S}TURMIAN WORDS},
journal = {International Journal of Algebra and Computation},
volume = {12},
number = {01-02},
pages = {371-385},
year = {2002},


URL = {

        https://doi.org/10.1142/S021819670200095X
},
}

@article{Carlitz&Scoville&Hoggatt:1972,
author = {L. Carlitz and R. Scoville and V. E. Hoggat},
title = {Fibonacci Representations of Higher Order - {II}},
journal = {Fibonacci Quarterly},
volume = {10},
number = {1},
pages = {71--80},
year = {1972},
publisher = {Taylor \& Francis},


URL = {

        https://doi.org/10.1080/00150517.1972.12430970



},

}

@article{Chekhova&Hubert&Messaoudi:2001,
     author = {Chekhova, Nataliya and Hubert, Pascal and Messaoudi, Ali},
     title = {Propri\'et\'es combinatoires, ergodiques et arithm\'etiques de la substitution de {Tribonacci}},
     journal = {Journal de th\'eorie des nombres de Bordeaux},
     pages = {371--394},
     year = {2001},
     publisher = {Universit\'e Bordeaux I},
     volume = {13},
     number = {2},
     mrnumber = {1879664},
     zbl = {1038.37010},
     language = {fr},
     url = {https://www.numdam.org/item/JTNB_2001__13_2_371_0/}
}

@article{Dekking&Shallit&Sloane:2020,
     author = {F. M. Dekking and J. Shallit and N. J. A. Sloane},
     title = {Queens in exile: non-attacking queens
on infinite chess boards},
     journal = { Elect. J. Combin.},
     pages = { \#P1.52},
     year = {2020},
     volume = {27},
     number ={1},
    
     url = { https://doi.org/10.37236/8905 }
}

@article{Dolce&Dvorakova&Pelantova:2023,
title = {On balanced sequences and their critical exponent},
journal = {Theoretical Computer Science},
volume = {939},
pages = {18-47},
year = {2023},
doi={10.1016/j.tcs.2022.10.014},
url = {https://www.sciencedirect.com/science/article/pii/S0304397522005941},
author = {Francesco Dolce and L'ubomíra Dvořáková and Edita Pelantová}

}

@article{Droubay&Justin&Pirillo:2001,
title = {Episturmian words and some constructions of de {L}uca and {R}auzy},
journal = {Theoretical Computer Science},
volume = {255},
number = {1},
pages = {539-553},
year = {2001},
doi={10.1016/S0304-3975(99)00320-5},
url = {https://www.sciencedirect.com/science/article/pii/S0304397599003205},
author = {Xavier Droubay and Jacques Justin and Giuseppe Pirillo}
}

@article{Dvorakova&Pelantova:2024,
title = {The repetition threshold of episturmian sequences},
journal = {European Journal of Combinatorics},
volume = {120},
pages = {104001},
year = {2024},
doi={10.1016/j.ejc.2024.104001},
url = {https://www.sciencedirect.com/science/article/pii/S0195669824000866},
author = {L’ubomíra Dvořáková and Edita Pelantová}

}

@article{Glen:2007,
title = {Powers in a class of A-strict standard episturmian words},
journal = {Theoretical Computer Science},
volume = {380},
number = {3},
pages = {330-354},
year = {2007},
doi={10.1016/j.tcs.2007.03.023},
url = {https://www.sciencedirect.com/science/article/pii/S0304397507002046},
author = {Amy Glen}
}

@inproceedings{Khodier&Schaeffer&Shallit:2025,
  title={Self-Verifying Predicates in {B{\"u}chi} Arithmetic},
  author={Mazen Khodier and Luke Schaeffer and Jeffrey Shallit},
  booktitle={International Conference on Implementation and Application of Automata},
  year={2025},
  series = "Lect. Notes in Comp. Sci.",
  volume = 15981,
  url={https://doi.org/10.1007/978-3-032-02602-6_17}
}

@misc{Mousavi:2016,
      title={Automatic Theorem Proving in {Walnut}}, 
      author={Hamoon Mousavi},
      year={2021},
      eprint={1603.06017},
      archivePrefix={arXiv},
      primaryClass={cs.FL},
      url={https://arxiv.org/abs/1603.06017}, 
}

@InProceedings{Mousavi&Shallit:2015,
author="Mousavi, Hamoon
and Shallit, Jeffrey",
editor="Manea, Florin
and Nowotka, Dirk",
title="Mechanical Proofs of Properties of the {Tribonacci} Word",
booktitle="Combinatorics on Words",
year="2015",
publisher="Springer International Publishing",
series = "Lect. Notes in Comp. Sci.",
volume = 9304,
url = "https://doi.org/10.1007/978-3-319-23660-5_15",
address="Cham",
pages="170--190"
}

@article{Richomme&Saari&Zamboni:2010,
title = {Balance and Abelian complexity of the {Tribonacci} word},
journal = {Advances in Applied Mathematics},
volume = {45},
number = {2},
pages = {212-231},
year = {2010},
doi={10.1016/j.aam.2010.01.006},
url = {https://www.sciencedirect.com/science/article/pii/S0196885810000187},
author = {Gwénaël Richomme and Kalle Saari and Luca Q. Zamboni}
}

@article{Rote:1994,  
author = {G{\"u}nter Rote},  
title = {Sequences with subword complexity $2n$},  
journal = {Journal of Number Theory},  
year = {1993},  
volume = {46},  
pages = {196--213},  
url = {http://page.mi.fu-berlin.de/rote/Papers/postscript/Sequences+with+subword+complexity+2n.ps},  
doi = {10.1006/jnth.1994.1012}
}

@InProceedings{Shallit:2021,
author="Shallit, Jeffrey",
editor="Lecroq, Thierry
and Puzynina, Svetlana",
title="Synchronized Sequences",
booktitle="Combinatorics on Words",
year="2021",
publisher="Springer International Publishing",
series = "Lect. Notes in Comp. Sci.",
volume = 12847,
url = "https://doi.org/10.1007/978-3-030-85088-3_1",
address="Cham",
pages="1--19"
}

@book{Shallit:2023, 
place={Cambridge}, 
series={London Mathematical Society Lecture Note Series}, 
title={The Logical Approach to Automatic Sequences: Exploring Combinatorics on Words with {Walnut}}, 
publisher={Cambridge University Press}, 
author={Shallit, Jeffrey}, 
year={2022}, 
collection={London Mathematical Society Lecture Note Series}
}

@article{Tan&Wen:2007,
title = {Some properties of the {Tribonacci} sequence},
journal = {European Journal of Combinatorics},
volume = {28},
number = {6},
pages = {1703-1719},
year = {2007},
doi = {10.1016/j.ejc.2006.07.007},
url = {https://www.sciencedirect.com/science/article/pii/S0195669806001533},
author = {Bo Tan and Zhi-Ying Wen},
}

@misc{oeis,
  author = {Neil J. A. Sloane},
  title = {The {On-Line} {Encyclopedia} of {Integer} {Sequences}},
  year = {2026},
  howpublished = {\url{https://oeis.org}}
}
\label{sec:biblio}

\end{document}